\newtheorem{definition}{Definition}
\newtheorem{rem}{Remark}
\newtheorem{lemma}{Lemma}
\newtheorem{theorem}{Theorem}
\newtheorem{trans}{Transformation}
\DeclareMathOperator{\So}{SO}
\DeclareMathOperator{\id}{id}
\title{Regularizations of non-euclidean polygons}
\author[D. Vartziotis]{Dimitris Vartziotis}
\address{Institute of Structural Analysis \& Antiseismic Research\newline National Technical University Athens (NTUA)\newline 15780 Athens, Greece\newline \emph{and}\newline TWT GmbH Science \& Innovation\newline Mathematical Research \& Services\newline Ernsthaldenstr. 17\newline70565 Stuttgart, Germany}
\email{dimitris.vartziotis@nikitec.gr}
\author[D. Bohnet]{Doris Bohnet}
\address{TWT GmbH Science \& Innovation\newline Mathematical Research \& Services\newline Ernsthaldenstr. 17\newline 70565 Stuttgart, Germany}
\email{doris.bohnet@twt-gmbh.de}
\date{}
\begin{document}
\begin{abstract}
We are interested in \emph{easy} geometric transformations which regularize $n$-polygons in the non-euclidean plane. A transformation is called \emph{easy} if it can be easily implemented into an algorithm. This article is motivated by preceding work on geometric transformations on euclidean polygons and possible applications for non-euclidean meshes. 
\end{abstract}
\maketitle
\section{Introduction}
\subsection{Historical remarks and motivation}
For a long time geometry was thought of as independent of our perception and as somehow divine. Kant is a famous example for this point of view as he gave our elementary geometric constructions a privileged role in his concept of cognition describing them as a priori to any experience. The geometry all the mathematicians and philosophers before 1830 referred to was the euclidean geometry. And it was a Copernican-like revolution not only for our concept of mathematics but also of epistemology that finally Gauss, Bolyai and Lobachevsky (\cite{L40}) did no longer try to prove the parallel axiom out of the other axioms of euclidean geometry, but described the consequences of negating the parallel axiom.\footnote{From the three mathematicians associated to the discovery of non-euclidean geometry it is only Lobachevsky who published his ideas 1829-30 in a complete scientific way while Gauss reported on his ideas only in private letters as did Bolyai.} There are two possible scenarios which arise from the negation of the parallel axiom: Given a line $l$ and a point $P$ not lying on this line we can either postulate that there is no line $l'$ parallel to $l$ and going through $P$ or that at least two lines $l'$ and $l''$ parallel to $l$ go through this point $P$ (see figures below). In the first case, we describe the \emph{elliptic geometry}, in the second the \emph{hyperbolic geometry}. This was the birth of a new concept of geometry and enabled the formalization of geometry. \\    

\begin{minipage}{0.3\textwidth}

\includegraphics[width=\textwidth]{./nichteuklidisch.mps}

\end{minipage}
\begin{minipage}{0.3\textwidth}
\includegraphics[width=\textwidth]{./nichteuklidisch1.mps}

\end{minipage}
\begin{minipage}{0.3\textwidth}
\includegraphics[width=\textwidth]{./nichteuklidisch2.mps}

\end{minipage}
\vspace{0.5cm}\newline
Instead of working with the axioms which define a geometry, Felix Klein perceived a geometry as a metric space where a group of transformations acts preserving certain characteristic invariants (cf \cite{K28}). In the case of the euclidean space the group of transformations is the group of isometries, i.e. all translations, rotations and reflections, and the respective invariant is the distance between two points.  \\
Looking at this piece of mathematical history we wondered if we could translate certain regularization methods described in the euclidean space into the non-euclidean setting. Regularizing polygons by simple geometric transformations plays an important role in industrial applications, and mathematical questions in this context often arise directly from practical problems: In engineering, any object is nowadays virtually given by a surface mesh of polygons (most often triangles) or a volume mesh of polyhedra (often tetrahedra or hexahedra). Theses meshes are on the one hand used to construct a new object, on the other hand to calculate certain properties (e.g. aerodynamical, acoustical or thermic properties) by using the mesh as discretization for the finite element method or finite volume method to solve the corresponding partial differential equations. \\ 
In both cases it is crucial to have a good mesh, i.e. a mesh of mostly regular polygons or polyhedra which adapts well to the real object. The regularization of meshes is therefore a standard process in engineering where the mostly used method is the following: You define a real function on the space of meshes which measure the quality of your mesh (e.g. a normalized volume function or an energy function). With optimization technics as Newton's method you calculate the maximum of this function which corresponds to a good mesh. This method is often not very efficient with regard to runtime. Additionally, it is often not clear if the solution really represents the global and not only a local maximum of the quality function. An alternative method (e.g. Laplace method or GETMe introduced and analyzed in \cite{VW09b},\cite{VW10}, \cite{VWS09} and \cite{VH12}) are direct variations of the polygons or polyhedra by elementary, easily computable geometric transformations. In \cite{VW12} (for polygons) or \cite{VH13} (for certain polyhedra) it is proved that the iterative method GETMe indeed converges to a good mesh, i.e. it maximizes an appropriate quality function. \\
These two aspects motivated us to consider easy geometric transformations which regularize a given polygon for non-euclidean geometries. At the moment there is no direct application for meshes of spherical or hyperbolic polygons but there are certain approaches, mainly in computer graphics, which use hyperbolic or spheric surfaces to model in a more elegant and adequate way a given object by incorporating its curvature. So it is not beside the point to think of future applications. \\
Returning to Klein's transformation groups we have to consider those transformations of euclidean polygons which are also adaptable to non-euclidean ones. For this reason, rotations will play a crucial role for us.\\
We would like to mention that there is considerable research going on about the structure of the space of hyperbolic polygons (see e.g. \cite{F11} and \cite{D13}), the generalization of certain euclidean properties or laws to the hyperbolic setting (see e.g. \cite{W10} or \cite{P12}) or about tesselations of hyperbolic surfaces (see e.g. \cite{D11} or \cite{LS12}). Our work is independent from these theoretical approaches and first and foremost driven by possible future applications.   
\subsection{Some classical transformations of euclidean polygons}
A classical transformation to regularize any euclidean triangle $\Delta=(z_0,z_1,z_2)$ with $z_i \in \mathbb{C}$ is the following associated to Napoleon.\footnote{There is no evidence for Napoleon's authorship but his name is historically associated with this construction. The first written proof of this easy statement is given by William Rutherford in the journal \emph{Lady's Diary} in 1825. There exists several different proofs using trigonometry or algebra which could be found e.g. on \emph{www.mathpages.com/home/kmath270/kmath270.htm}. A good survey of more recent generalizations of Napoleon's theorem is given in \cite{M96}.}
\begin{figure}
\begin{center}
\includegraphics[width=0.5\textwidth]{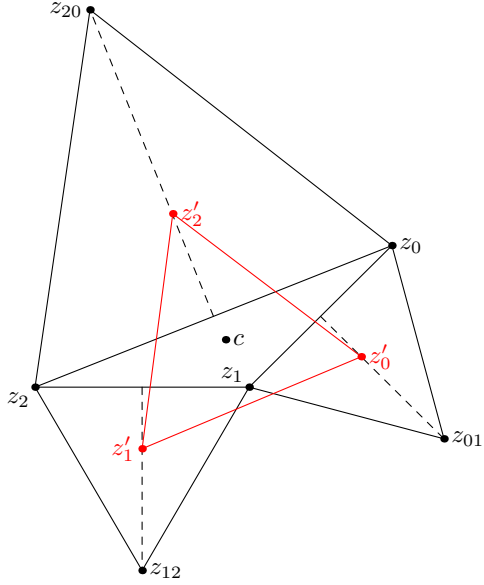}
\caption[Napoleon]{Napoleon's construction}
\end{center}
\label{fig:Napoleon}
\end{figure}
\begin{theorem}[Napoleon's Theorem]
If equilateral triangles are constructed on the sides of any triangle, either all outward, or all inward, the centres of those equilateral triangles themselves form an equilateral triangle.
\label{t.napoleon}
\end{theorem}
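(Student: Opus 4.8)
The plan is to work in complex coordinates, exactly as the paper sets up $\Delta=(z_0,z_1,z_2)$ with $z_i\in\mathbb{C}$, and to reduce everything to the algebra of the cube roots of unity. Write $\zeta=e^{2\pi i/3}$, so that $1+\zeta+\zeta^2=0$, and recall the standard criterion that an ordered triple $(a,b,c)$ is the vertex set of a positively oriented equilateral triangle (allowing the degenerate case $a=b=c$) exactly when $a+\zeta b+\zeta^2 c=0$, the negatively oriented case being the conjugate condition $a+\zeta^2 b+\zeta c=0$. I would first state this criterion (it is itself a one-line computation: both conditions are invariant under the similarities $z\mapsto \alpha z+\beta$, and one checks it on the model triangle $(1,\zeta,\zeta^2)$), since it turns the theorem into a short identity.

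Next I would write down the centre $c_k$ of the equilateral triangle erected on the side $(z_k,z_{k+1})$, indices taken modulo $3$. Its apex is $z_{k+1}$ rotated about $z_k$ through $\mp 60^\circ$, i.e.\ $z_k+e^{\mp i\pi/3}(z_{k+1}-z_k)$, the sign distinguishing the outward from the inward construction; averaging the three vertices gives
\begin{equation}
c_k=\tfrac{1}{3}\bigl((2-e^{\mp i\pi/3})\,z_k+(1+e^{\mp i\pi/3})\,z_{k+1}\bigr),
\end{equation}
an affine combination of $z_k$ and $z_{k+1}$ whose two weights sum to $1$. The point of the exercise is the identity $e^{-i\pi/3}=-\zeta$: in the outward case the weights become $2+\zeta$ and $1-\zeta$, simple polynomials in $\zeta$, which is what makes the last step collapse.

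Then the proof is a direct substitution: plug $c_0,c_1,c_2$ into $c_0+\zeta c_1+\zeta^2 c_2$ and collect the coefficient of each $z_k$; using $\zeta^3=1$ each coefficient reduces to $1+\zeta+\zeta^2=0$, so the Napoleon triangle satisfies the equilateral criterion. The inward case is obtained verbatim by replacing $e^{-i\pi/3}$ with $e^{+i\pi/3}$ and the criterion by its conjugate, so no separate argument is needed; one only remarks that the outer and inner constructions produce equilateral triangles of opposite orientation.

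I expect the only delicate point to be bookkeeping rather than a genuine obstacle: pinning down a consistent convention for ``outward'' versus ``inward'' so that a single rotation angle is used on all three sides, and being explicit that ``equilateral'' is meant to include the degenerate point-triangle — this degeneration actually occurs for the inner Napoleon triangle precisely when $\Delta$ is itself equilateral. As an alternative, closer in spirit to the rest of the paper, I would mention the rotation-theoretic proof: the composition of the three rotations by $120^\circ$ about $c_0,c_1,c_2$ carries $z_0\mapsto z_1\mapsto z_2\mapsto z_0$ and has trivial linear part, hence is the identity, and a short lemma on fixed points of compositions of rotations forces $(c_0,c_1,c_2)$ to be equilateral. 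The complex-number computation is shorter and more transparently ``easily implemented,'' which fits the theme of the article.
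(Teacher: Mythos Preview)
Your proof is correct and follows the same overall strategy as the paper: represent the vertices as complex numbers, invoke an algebraic characterization of equilateral triangles, write the three centroids explicitly, and verify the characterization for them. The difference lies in the choice of criterion. The paper proves and uses the orientation-free quadratic identity
\[
z_0z_1+z_1z_2+z_2z_0=\tfrac{1}{3}(z_0+z_1+z_2)^2
\]
as its auxiliary lemma, whereas you use the orientation-sensitive linear condition $a+\zeta b+\zeta^2 c=0$. The two are equivalent---indeed the product $(a+\zeta b+\zeta^2 c)(a+\zeta^2 b+\zeta c)$ is exactly the paper's quadratic form---but your linear criterion reduces the final verification to collecting coefficients of $z_0,z_1,z_2$ rather than expanding products of centroids, and it tracks orientation automatically, which is why the outward/inward cases and the degenerate inner Napoleon triangle fall out for free. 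The paper's symmetric version, on the other hand, avoids having to commit to an orientation convention at the outset.
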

\begin{proof}
Let $z_0,z_1,z_2 \in \mathbb{C}$ denote the vertices of an arbitrary triangle in the plane. The centroid of this triangle is easily calculated to be $c=\frac{1}{3}(z_0+z_1+z_2)$. We will use the following auxiliary lemma: 
\begin{lemma}\label{l.auxiliary}
Let $z_0,z_1,z_2 \in \mathbb{C}$ be the vertices of a triangle $\Delta$. Then $\Delta$ is equilateral if and only if $$z_0z_1 + z_0z_2 + z_1z_2 = \frac{(z_0+z_1+z_2)^2}{3}.$$
\end{lemma}
\begin{proof}[Proof of Lemma~\ref{l.auxiliary}]
The proof is by calculation. The centroid of $\Delta$ is $c=\frac{1}{3}(z_0+z_1+z_2)$. Therefore the triangle is equilateral if and only if the distances $\left|z_j - c\right|=r$ are all equal for $j=0,1,2$ and the angles between them are $\frac{2\pi}{3}$. So (by rotating the whole triangle if necessary) we get $z_0-c=r$, $z_1-c=re^{\frac{2\pi i}{3}}$ and $z_2-c=re^{\frac{4\pi i}{3}}$. This implies that $$(z_1-c)(z_2-c)=(z_0-c)^2.$$
Substituting $c=\frac{1}{3}(z_0+z_1+z_2)$ and factorizing the equation we get 
\begin{align*}
z_0z_1 + z_1z_2 + z_0z_2 = \frac{1}{3}(z_1+z_2+z_3)^2
\end{align*}
\end{proof}
We construct an equilateral triangle over the side from $z_0$ to $z_1$ by using the formula of Lemma~\ref{l.auxiliary}: The third vertex $z_{01}$ is therefore
$$z_{01}=\frac{z_0+z_1}{2} + i\sqrt{3}\frac{z_0-z_1}{2}.$$
We construct the other two equilateral triangles in an analogous way and calculate their centroids $c_{0},c_1,c_2$, also by utilizing the formula of Lemma~\ref{l.auxiliary}, and we get $$c_0= \frac{z_0+z_1}{2} + \frac{i}{\sqrt{3}}\frac{z_0-z_1}{2}$$ for the centroid of the triangle $(z_0,z_{01},z_1)$. The other centroids are calculated in the same way. 
Now we only have to prove that $c_0,c_1,c_2$ form an equilateral triangle by checking that 
$$c_0c_1 + c_1c_2 + c_0c_2 = \frac{1}{3}(c_1+c_2+c_3)^2.$$
 
\end{proof} 
\begin{rem}   
It should be a priori clear that the formula for the centroid $c$ cannot be adapted one-to-one to the elliptic or hyperbolic plane while the characterization of a regular triangle by the equal distance of each vertex to $c$ and an angle of $\frac{2\pi}{3}$ between $z_j-c$ and $z_{j+1}-c$ will be helpful in all geometries. 
\end{rem}
\begin{figure}
\begin{center}
\includegraphics[width=0.5\textwidth]{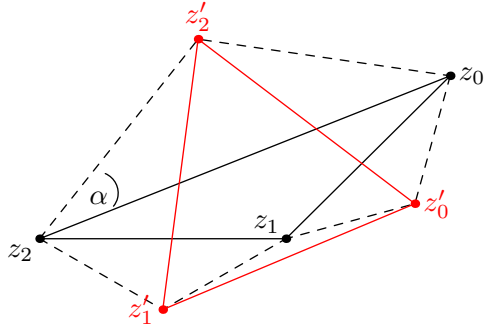}
\caption[Transformation2]{Transformation 2}
\end{center}
\label{fig:Transformation2}
\end{figure}
If we consider the construction, we remark that the vertices of the new equilateral triangle lie on the median line of each side of the original triangle. So a possible variation of Napoleon's triangle construction is the following: 
\begin{trans}\label{t.isosceles}
Construct over each side of an arbitrary triangle an isosceles triangle and connect their apices to a new triangle. 
\end{trans}  
There are a lot of possible ways to adapt this method in practice, e.g.
\begin{trans}\label{t.isosceles1}Fix an angle $\alpha < \frac{\pi}{2}$.
Construct over each side of an arbitrary triangle an isosceles triangle such that the two equal angles are $\alpha$. Connect their apices to a new triangle. 
\end{trans}
A concise discussion of this transformation can be found in \cite{G61}.  
Another specification of Transformation~\ref{t.isosceles} is the following which allows an especially simple mathematical description:
\begin{trans}\label{t.isosceles2}Let $\Delta=(z_0,z_1,z_2)$ be a triangle and $c$ its circumcenter.
Construct over each side of $\Delta$ an isosceles triangle such that the distance of the apices to the circumcenter $c$ is equal to the distance of the vertices $z_i$ to the circumcenter $c$. Connect then their apices to a new triangle. 
\end{trans}
\begin{figure}
\begin{center}
\includegraphics[width=0.5\textwidth]{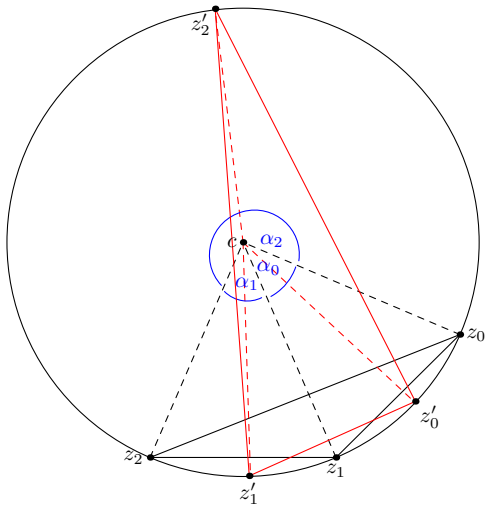}
\caption[Transformation3]{Transformation 3}
\end{center}
\label{fig:Transformation3}
\end{figure}
The advantage of Transformation~\ref{t.isosceles2} is obvious: Let $\Delta=(z_0,z_1,z_2)$ be a triangle and $c$ its circumcenter. It is then possible to rotate the point $z_0$ around $c$ such that it intersects the center of the side $z_0z_1$ and define the rotated $z_0$ as vertex of the new triangle. The triangle of $z_0,z_1$ and $c$ is isosceles because of $|z_0-c|=|z_1-c|=r$. So the bisecting line of the angle $\alpha$ between $z_0-c$ and $z_1-c$ coincides with the median line of the side $z_0z_1$. Consequently, our rotation is exactly a rotation of $z_0$ around $c$ of $\frac{\alpha}{2}$, i.e. $z_0'=z_0e^{\frac{\alpha}{2}i}$.\\ This method can be easily iterated, and we will show in Section ~\ref{s.spheric} that we finally change the vertices in a way that the angles between $z_j-c$ are equally $\frac{2\pi}{3}$ for $j=0,1,2$ which is equivalent to the fact that the triangle is equilateral as we have kept the distance $|z_j-c|=r$ throughout the transformation. \\ 
While Transformation~\ref{t.isosceles1} can be directly adapted to a $n$-polygon this is not possible for Transformation~\ref{t.isosceles2} as not every $n$-polygon for $n > 3$ has a circumscribed circle. An early study of this transformation represents the article \cite{P08}. 
\begin{trans}
Construct over each side of an arbitrary polygon an isosceles triangle and connect their apices to a new polygon.
\end{trans}
These transformations were the starting point for our considerations about non-euclidean polygons. So, it is time to leave the euclidean setting. As an introduction to the non-euclidean setting we start discussing the Napoleon's theorem for non-euclidean triangles before we prove in detail within the main part of this article the regularizing transformations for non-euclidean polygons.
\subsection{Napoleon's Theorem for non-euclidean triangles}
\subsubsection{Spheric triangles}
Before we start, we would like to mention the article \cite{T95} which deals with Napoleon-like properties of spherical triangles, but from a different point of view than we do. Consider a spheric triangle represented by three points $(z_0,z_1,z_2)$ on the sphere $\mathbb{S}^2$ and their connecting geodesics on the sphere. There exists a circumcenter $c$ and an (euclidean) radius $r$ such that all points $z_j$ lie on a circle of radius $r$ around $c$. If we understand the points $z_j$ as unit vectors in $\mathbb{R}^3$, we see that they together with the origin form a tetrahedron where the three triangle sides at the origin are isosceles triangles with side length $1$ equal to the radius of the sphere. Therefore, the angle at the origin between the vectors $z_j$ define uniquely the (euclidean) length of the opposite triangle side $z_jz_{j+1}$. Consequently, it is clear that if the angles between the vectors $z_j$ are all equal, that then the spheric triangle is equilateral. Alternatively, you can rotate the triangle sides around the vector $c$ by $\frac{2\pi}{3}$ mapping $z_j$ to $z_{j+1}$ for $j\in\mathbb{Z}_3$ proving that the angles between $z_j-c$ are equally $\frac{2\pi}{3}$. Hence, the definition of an equilateral euclidean triangle given above can be easily adapted to a spheric triangle. Hence, we can prove Napoleon's Theorem in the following way:
\begin{theorem}[Napoleon's Theorem on the sphere]
Let $\Delta$ be a triangle on the sphere $\mathbb{S}^2$. Then we obtain a regular triangle by constructing equilateral triangles on each side and connecting their centroids to the new triangle. 
\end{theorem}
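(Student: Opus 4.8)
The plan is to reduce the statement to the characterization of equilateral spherical triangles recalled above and then to imitate, in convenient coordinates, the computation behind the planar Napoleon theorem. Recall that three points $w_0,w_1,w_2$ of $\mathbb{S}^2$, viewed as unit vectors in $\mathbb{R}^3$, are the vertices of an equilateral spherical triangle exactly when the pairwise inner products $\langle w_i,w_j\rangle$ are all equal, equivalently when some order-$3$ rotation of $\So(3)$ permutes them cyclically. So it is enough to produce the three centroids $c_0,c_1,c_2$ explicitly and to verify $\langle c_0,c_1\rangle=\langle c_1,c_2\rangle=\langle c_2,c_0\rangle$. First I would normalize: letting $c$ be the circumcenter of $\Delta=(z_0,z_1,z_2)$, rotate so that $c=(0,0,1)$; then all vertices share the colatitude $\psi$ (the spherical circumradius), i.e. $z_j=\sin\psi\,u_j+\cos\psi\,(0,0,1)$ with $u_j=(\cos\phi_j,\sin\phi_j,0)$, and the whole configuration becomes a function of $\psi$ and the longitudes $\phi_0,\phi_1,\phi_2$.

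The geometric input that makes the centroids computable is this: for the side $z_jz_{j+1}$ (indices mod $3$) the perpendicular bisector of the geodesic arc is the meridian through $c$ bisecting that arc, because $c$ is equidistant from $z_j$ and $z_{j+1}$, so the median of the isosceles triangle $(c,z_j,z_{j+1})$ meets its base orthogonally. Consequently both the apex $w_j$ of the equilateral triangle erected on that side and the centroid $c_j$ of that equilateral triangle lie on this meridian. Imposing $\langle w_j,z_j\rangle=\langle w_j,z_{j+1}\rangle=\langle z_j,z_{j+1}\rangle=:\mu_j$ together with $\|w_j\|=1$ and the outward choice of side expresses $w_j$ as an explicit combination of $z_j+z_{j+1}$ and the unit normal $n_j=\dfrac{z_j\times z_{j+1}}{\|z_j\times z_{j+1}\|}$; the centroid is the fixed point of the threefold symmetry of the equilateral triangle, hence the normalized vertex sum $c_j=\dfrac{z_j+z_{j+1}+w_j}{\|z_j+z_{j+1}+w_j\|}$, and since the three sides all have inner product $\mu_j$ one gets the tidy normalization $\|z_j+z_{j+1}+w_j\|^2=3(1+2\mu_j)$. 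Substituting $\mu_j=\sin^2\psi\,\cos(\phi_{j+1}-\phi_j)+\cos^2\psi$ and the coordinate forms of $z_j,n_j$ yields closed expressions for $c_0,c_1,c_2$ in terms of $\psi$ and the $\phi_j$.

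The remaining step is to evaluate the three inner products $\langle c_i,c_j\rangle$ from these expressions and check that they agree — equivalently, to exhibit the order-$3$ element of $\So(3)$ cyclically permuting $c_0,c_1,c_2$, whose axis is the circumcenter of the new triangle. I expect this to be the main obstacle. In the Euclidean setting the analogous step is essentially free: the three suitably oriented rotations by $\tfrac{2\pi}{3}$ about the centroids compose to a rotation by $2\pi$, i.e. a translation, which fixes a vertex and is therefore the identity, after which a rotation-composition lemma forces the triangle of centroids to have angles $\tfrac{\pi}{3}$, hence to be equilateral. On $\mathbb{S}^2$ this shortcut collapses, because $\So(3)$ is non-abelian and rotation angles do not simply add: the corresponding composition still fixes a vertex but is no longer forced to be trivial, so nothing replaces the translation argument, and one is left with a bare trigonometric identity in $\psi,\phi_0,\phi_1,\phi_2$. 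It seems cleanest to organize that identity in two stages — first that $c_0,c_1,c_2$ lie at a common distance from some point $c^{*}$, so they sit on a small circle, and then that they are equally spaced on it — and to fix at the outset the standing hypotheses (the triangle small enough to admit a genuine circumcircle, all equilateral triangles erected outward or all inward), the remaining sign cases following from the same computation with the sign of the normal component of $w_j$ reversed.
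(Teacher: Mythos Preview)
Your plan is sound but takes a markedly more computational route than the paper. Where you propose to write the centroids $c_j$ explicitly in terms of $\psi,\phi_0,\phi_1,\phi_2$ and then verify the identity $\langle c_0,c_1\rangle=\langle c_1,c_2\rangle=\langle c_2,c_0\rangle$ by direct trigonometry --- the step you yourself flag as the main obstacle --- the paper sidesteps this entirely via radial projection. It works in the affine plane $P$ through $z_0,z_1,z_2$ (or any parallel slice), performs the ordinary Euclidean Napoleon construction there to obtain an equilateral planar triangle by Theorem~\ref{t.napoleon}, and then normalizes the three new vertices to unit length. The single structural fact driving everything is that $v\mapsto v/\|v\|$ is $\So(3)$-equivariant: the order-$3$ rotation that witnesses equilaterality in $P$ therefore survives normalization and witnesses spherical equilaterality by exactly your characterization. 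The same equivariance, applied to each of the three erected equilateral triangles separately, shows that planar-construction-then-project coincides with the intrinsic spherical construction (erecting spherical equilateral triangles and taking their centroids). So the trigonometric identity you anticipate is never computed; it is inherited wholesale from the planar theorem. Your approach has the merit of being self-contained and of producing explicit formulas (your $c_j=(z_j+z_{j+1}+w_j)/\sqrt{3(1+2\mu_j)}$ is correct and useful), while the paper's buys brevity at the price of quoting the Euclidean result. One small caveat if you compare closely: the symmetry axis of the planar Napoleon triangle passes through the \emph{centroid} of $\Delta_e$, not its circumcenter, so the paper's remark that the circumcenter vector $c$ is preserved is inaccurate; this does not affect either argument.
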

\begin{proof}
Let the triangle $\Delta$ be defined by three unit vectors $z_0,z_1,z_2 \in \mathbb{R}^3$. Join these vectors to a tetrahedron. If we cut this tetrahedron with a plane $P$ parallel to the euclidean triangle $\Delta_e$ formed by $z_0,z_1,z_2$, then the intersection always gives us a triangle $\Delta'_e$ similar to $\Delta_e$. Moreover, the circumcenter vector $c$ intersects $\Delta'_e$ in its circumcenter (defined within the plane $P$). Choose one $\Delta'_e$ and construct equilateral (euclidean) triangles on each side such that they all lie in the plane $P$ parallel to $\Delta_e$. Take the circumcenters of these three triangles and connect them to a new triangle $\Delta'_{e,new}$ which is equilateral by Theorem~\ref{t.napoleon}. Taking the vertices $z'_{0,new},z'_{1,new}$ and $z'_{2,new}$ of this new triangle as vectors, we normalize them to unit vectors $z_{0,new},z_{1,new}$ and $z_{2,new}$ which define us a new spheric triangle $\Delta_{new}$ on the sphere. \\
We now show that $\Delta_{new}$ is equilateral. Note that the circumcenter vector $c$ of $\Delta$ is still the circumcenter vector of $\Delta_{new}$. If we rotate $z'_{j,new}$ around $c$ by $\frac{2\pi}{3}$ we map it onto $z'_{j+1,new}$ because $\Delta'_{e,new}$ is equilateral with circumcenter $c$. Clearly, we map also $z_{j,new}$ onto $z_{j+1,new}$ as they are just renormalizations of the first vectors. \\
What is left to prove is that the construction above is equivalent to constructing equilateral triangles on the side of our spheric triangle. Therefore consider again $\Delta'_e$ and the three equilateral triangles on its sides inside the plane $P$. Denote the three vectors of the new vertices by $z_{01}', z_{12}'$ and $z_{20}'$. If we normalize them to unit vectors $z_{01},z_{12}$ and $z_{20}$, we now have to show that the spheric triangle formed by $z_0,z_{01}$ and $z_{1}$ is equilateral (the other two triangles are analogously proven to be equilateral): Consider the vector $z_{0,new}$ which is a circumcenter of this triangle. If we rotate $z_0$ by $\frac{2\pi}{3}$ around $z_{0,new}$, it is mapped to $z_{01}$ because $z'_{0}$ rotated around $z_{0,new}'$ is mapped to $z_{01}'$, and the analogous statement is true for the other two points. So the spheric triangle $(z_0,z_{01},z_1)$ is an equilateral triangle on the side $z_0z_1$ of our original triangle and $z_{0,new}$ is its circumcenter. This proves our statement.    
\end{proof}

\begin{figure}
\begin{center}
\includegraphics[width=0.5\textwidth]{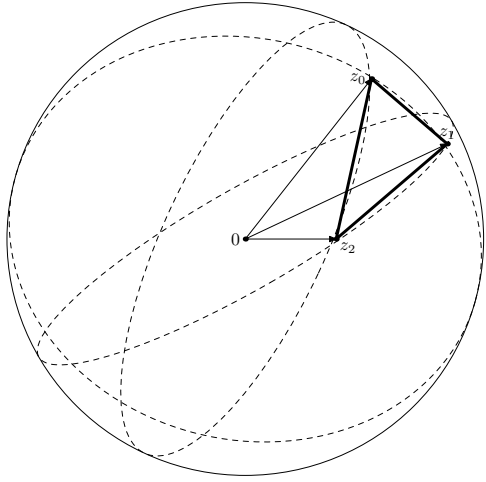}
\caption[spheric_triangle]{spheric triangle}
\end{center}
\label{fig:spheric_triangle}
\end{figure}

\subsubsection{Hyperbolic triangles}
Consider now a triangle $\Delta=(z_0,z_1,z_2)$ in the Poincar\'e disk $\mathbb{D}$. We call a hyperbolic triangle regular if all its angles are equal. This triangle is uniquely defined by three geodesics, arcs, which define the sides of the triangle. These geodesics meet the border $\partial \mathbb{D}$ orthogonally in six points $a_0,\dots,a_5$. If the inner angles of the triangles are equal the distances between the points are triplewise equal, i.e. of the form $(a,b,a,b,a,b)$ if we consider the border as $\mathbb{S}^1$ with the usual arc length as metric. This implies that $\Delta$ is centered at the origin $0$, and we can then conclude that the distance of $0$ to each of the vertices is equal because the rotation is an isometry for the hyperbolic disk. In this case it would be reasonable to call the origin the circumcenter of the triangle. \\
Given any triangle $\Delta$ on the Poincar\'e disk, we assume that its circumcenter is the origin. We map the points $z_0,z_1,z_2$ into $\mathbb{C}$ keeping the origin fixed and using polar coordinates. Recall that only the radius change under this map, but not the angle between the vectors. Now we make Napoleon's construction in $\mathbb{C}$ and obtain the new triangle $\Delta^e_{new}=(z^e_{0,new},z^e_{1,new},z^e_{2,new})$ which is regular in $\mathbb{C}$. In particular, it has still the origin as its circumcenter and the angles between its vertex vectors are equal. So, there exist $r$ and $\theta$ such that we have $z^e_{0,new}=re^{\theta i}, z^e_{1,new}=re^{\left(\frac{2\pi }{3}+\theta\right)i}$ and $z^e_{2,new}=re^{\left(\frac{4\pi i}{3}+\theta\right)i}$. If $r \geq 1$, we rescale the triangle. Mapping these points into the Poincar\'e disk gives the hyperbolic radius $\rho=-\ln\frac{1+r}{1-r}$ while the angle is kept. So we have a new hyperbolic triangle centered at the origin with coordinates $z_{j,new}=-\ln\frac{1+r}{1-r}e^{\left(\frac{2\pi}{3} + \theta\right)ij}$ for $j=0,1,2$. And this is certainly a regular triangle in the Poincar\'e disk.
While this construction provides us with a regular triangle in the Poincar\'e disk we still have to prove that this triangle coincides with the triangle we obtain by making Napoleon's construction in the Poincar\'e disk: Look at $z_0,z_1$ in polar coordinates in $\mathbb{C}$ and construct $z_{01}$ as new vertex of an equilateral triangle in $\mathbb{C}$. If we translate the centroid of this triangle in the origin and map it into $\mathbb{D}$ we obtain certainly a regular triangle as the angles are kept. But this euclidean translation is not an isometry in the hyperbolic plane.

\begin{figure}\begin{center}
\includegraphics[width=0.5\textwidth]{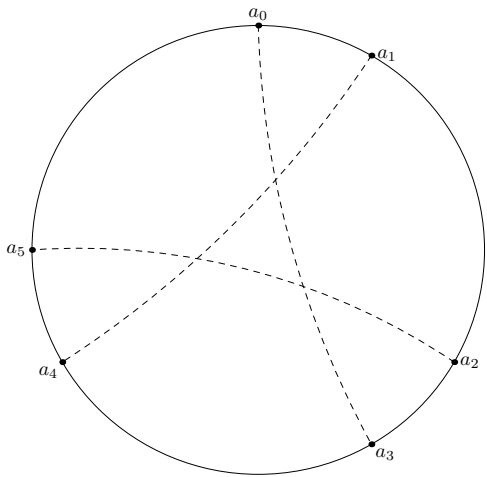}
\caption[hyp_triangle]{regular hyperbolic triangle}
\label{fig:hyp_triangle}

\end{center}

\end{figure}
\section{Regularization of spheric $n$-polygons}\label{s.spheric}
We consider spheric $n$-polygons and describe an easy algorithm which regularize them. Starting with the much easier case of a spheric triangle we explore later on the possibilities to generalize the employed method to arbitrary spheric $n$-polygons. 
First of all, we define what we call a regular spheric $n$-polygon. We consider the elliptic plane modelled as the $2$-sphere $\mathbb{S}^2$ embedded into $\mathbb{R}^3$, so points in the spheric plane correspond to unit vectors in $\mathbb{R}^3$. We write $v-w$ for vectors $v,w\in \mathbb{S}^2 \subset \mathbb{R}^3$ to denote the vector which join $v$ and $w$ within $\mathbb{R}^3$. Especially, we denote by $\left\|v\right\|, v\in \mathbb{R}^3$, the usual euclidean norm and not - if not otherwise specified - the length on the sphere.   
\begin{definition}
A \textbf{spheric n-polygon} is a $n$-tupel $P_n=(z_0, \dots,z_{n-1})$ of points $z_i \in \mathbb{S}^2$ on the sphere which denote the vertices of the polygon counter clockwisely counted. We call $P_n$ \textbf{regular} if there exists a matrix $A \in \So(3)$ such that $A^n=\id$ and $z_i=A^iz_0$ for $i=0,\dots n-1$. 
\end{definition}
\begin{rem}
Every matrix $A \in \So(3)$ is uniquely defined by a vector which describes its rotation axis and an angle of rotation. So changing the coordinate system (using the real Jordan decomposition for $A$) such that the rotation axis corresponds to the $z$-direction provides us with $A$ in the canonical form $$A= \begin{pmatrix}\cos (2\pi/n) & \sin (2\pi/n) &0\\-\sin(2\pi/n) & \cos(2\pi/n)&0\\0&0&1\end{pmatrix}.$$
For $n=3$ the rotation axis is just the middle point of the circumscribed circle of the triangle. 
\end{rem}
\subsection{Regularization of spheric triangles}

We start considering a spheric triangle $\Delta=(z_0,z_1,z_2)$. Denote by $D(\phi,v) \in \So(3)$ the rotation of $\phi$ around the axis $v$. Recall that every spheric triangle (exactly as a usual euclidean triangle) has a circumscribed circle on the sphere such that all of its vertices lie on this circle. But before we start we make clear what we mean by saying that a triangle converges to a regular triangle. Denote by $c \in \mathbb{S}^2$ the center of the circumscribed circle, i.e. the circumcenter, and by $\alpha_j$ the angle between the vectors $c-z_j$ and $c-z_{j+1}$ for $j \in \mathbb{Z}_3$. 
\begin{definition} Let the notations be as above. 
We say that a spheric $n$-triangle \textbf{converges to a regular spheric triangle} if the vector of its angles $(\alpha_0,\alpha_1,\alpha_2) \in \mathbb{R}^3$ converges to $(\frac{2\pi}{3}, \frac{2\pi}{3}, \frac{2\pi}{3})$ with respect to the canonical euclidean norm.
\end{definition}
\begin{rem}
As we keep the distance $\left\|c-z_j\right\|$ for $j=0,1,2$ throughout the transformation fixed, the criterion on the angles is sufficient to define regularity. 
\end{rem}
\subsubsection{First method}
We calculate the center $c \in \mathbb{S}^2$ of the circumscribed circle of $\Delta$.
We transform the triangle $\Delta$ by rotating the points $z_0,z_1$ and $z_2$ around $c$. The angle of rotation has to depend on the point to change the inner angle of the triangle. We choose the following transformation: Denote the start triangle by $\Delta^{(0)}=(z_0^{(0)},z_1^{(0)},z_2^{(0)})$. 
We measure the angle $\alpha_0^{(0)}$ between the vectors $c-z^{(0)}_0$ and $c-z^{(0)}_1$ and rotate $z^{(0)}_0$ by half of this angle (compare Figure~\ref{fig:Transformation3}), this means: 
$$z_0^{(1)}=D\left(c,\frac{\alpha^{(0)}_0}{2}\right)z_0^{(0)}.$$ 
The other points are transformed in the same way: 
\begin{align*}
z_1^{(1)}&=D\left(c,\frac{\alpha^{(0)}_1}{2}\right)z_1^{(0)},\\
z_2^{(1)}&=D\left(c,\frac{\alpha^{(0)}_2}{2}\right)z_2^{(0)},
\end{align*}
where $\alpha^{(0)}_1$ denotes the angle between the vectors $c-z_1^{(0)}$ and $c-z_2^{(0)}$ and $\alpha^{(0)}_2$ the angle between vectors $c-z_2^{(0)}$ and $c-z_0^{(0)}$. \\
It suffices to understand how the angles change in order to understand the mechanism of this algorithm. As the circumcenter $c$ and the distance from $c$ to the vertices are preserved by the transformation we can construct the triangle out of the three angles $\alpha_0,\alpha_1$ and $\alpha_2$. \\
We could understand $\alpha^{(0)}=(\alpha^{(0)}_0, \alpha^{(0)}_1, \alpha^{(0)}_2)$ as vector, and its transformation as a matrix: 
$$\alpha^{(1)}=A\alpha^{(0)}, \; \begin{pmatrix}\alpha^{(1)}_0\\ \alpha^{(1)}_1\\ \alpha^{(1)}_2\end{pmatrix}=\begin{pmatrix}\frac{1}{2} & \frac{1}{2} & 0 \\0 & \frac{1}{2} & \frac{1}{2}\\\frac{1}{2} & 0 & \frac{1}{2} \end{pmatrix}\begin{pmatrix}\alpha^{(0)}_0\\\alpha^{(0)}_1\\\alpha^{(0)}_2\end{pmatrix}$$

The matrix $A$ is a circulant matrix\footnote{A survey on circulant matrices is the book \cite{D79} by Davis. That circulant matrices can be useful to describe linear transformations of $n$-polygon is already folklore, see e.g. \cite{D40} or \cite{Da79}.}, and so the eigenvalues are easily deduced to be $\lambda_0=1, \lambda_1=\frac{1 + \sqrt{3}i}{4}$ and $\lambda_2=\overline{\lambda_1}$. Using the three eigenvectors $v_0,v_1,v_2$ as new coordinate system we can represent $A$ as $$A_{new}=\begin{pmatrix}1 & 0 & 0\\0 & \frac{1}{2} \cos(\frac{\pi}{3}) & \frac{1}{2} \sin(\frac{\pi}{3})\\0 & -\frac{1}{2}\sin(\frac{\pi}{3}) & \frac{1}{2} \cos(\frac{\pi}{3})\end{pmatrix}.$$ If we represent every angle vector $\alpha=(\alpha_0,\alpha_1,\alpha_2)$ (with $\alpha_0 + \alpha_1 + \alpha_2 =1$ corresponding to $2\pi$) within this new coordinate system we get $\alpha=a_0v_0 + a_1v_1 + a_2v_2$ and $\alpha_{new}=(a_0,a_1,a_2)$. It is important to note that $a_0 \neq 0$: 
\begin{lemma}\label{lemma_a0}
Let $\alpha=(\alpha_0,\alpha_1,\alpha_2) \in \mathbb{R}^3$ with $\alpha_0 + \alpha_1 + \alpha_2=1$. With the notations above we have $a_0 \neq 0$ for $\alpha=a_0v_0 + a_1v_1 +a_2v_2$. 
\end{lemma}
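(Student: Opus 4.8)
The plan is to exploit the explicit structure of the circulant matrix $A$, and in particular to pin down the eigenvector $v_0$ belonging to the eigenvalue $\lambda_0 = 1$. Since every row of $A$ sums to $1$, the all-ones vector $(1,1,1)$ is fixed by $A$, so $v_0$ is necessarily a nonzero scalar multiple of $(1,1,1)$. The other two eigenvectors, whether one keeps the complex Fourier vectors $(1, e^{2\pi i k/3}, e^{4\pi i k/3})$ for $k = 1, 2$ or passes to their real and imaginary parts (as is done to obtain the real block form $A_{new}$), all lie in the hyperplane
$$H = \{\, x = (x_0, x_1, x_2) \in \mathbb{R}^3 : x_0 + x_1 + x_2 = 0 \,\},$$
because $1 + e^{2\pi i k/3} + e^{4\pi i k/3} = 0$ for $k = 1, 2$. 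Thus $v_1, v_2 \in H$ while $v_0 \notin H$, giving the direct sum decomposition $\mathbb{R}^3 = \mathbb{R}v_0 \oplus H$.

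Granting this, the computation of $a_0$ is immediate. Given $\alpha = (\alpha_0, \alpha_1, \alpha_2)$ with $\alpha_0 + \alpha_1 + \alpha_2 = 1$, write $\alpha = a_0 v_0 + w$ with $w = a_1 v_1 + a_2 v_2 \in H$, and apply the linear functional $\ell(x) = x_0 + x_1 + x_2$. Since $\ell(w) = 0$ and $\ell(v_0) \neq 0$, we obtain $1 = \ell(\alpha) = a_0\,\ell(v_0)$, hence $a_0 = 1/\ell(v_0) \neq 0$. Geometrically, $a_0 v_0$ is the orthogonal projection of $\alpha$ onto the line through $(1,1,1)$, which for any $\alpha$ with coordinate sum $1$ is the nonzero vector $\tfrac{1}{3}(1,1,1)$.

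I do not expect a genuine obstacle here: the claim reduces to the elementary fact that the ``constant'' eigenvector of a circulant matrix is transverse to the span of the ``oscillating'' Fourier modes, and the normalization $\alpha_0 + \alpha_1 + \alpha_2 = 1$ is exactly what fixes the $v_0$-component at a prescribed nonzero value. The only thing to check with a little care is that passing to the real eigenbasis used for $A_{new}$ does not perturb $v_0$, which it does not, since $\lambda_0 = 1$ is real and its eigenvector may be chosen real (indeed equal to $(1,1,1)$) from the outset.
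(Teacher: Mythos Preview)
Your argument is correct and is essentially the same as the paper's: both rest on the observation that the eigenvectors $v_1, v_2$ have coordinate sum zero while $v_0$ does not, so the constraint $\alpha_0+\alpha_1+\alpha_2=1$ forces $a_0\neq 0$. The only cosmetic difference is that the paper phrases this as a proof by contradiction (assume $a_0=0$, sum coordinates, obtain $0=1$), whereas you compute $a_0=1/\ell(v_0)$ directly via the linear functional $\ell$; the underlying content is identical.
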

\begin{proof}
Assume that $a_0=0$, then we have $\alpha=a_1v_1 + a_2v_2$. Utilizing the hypothesis that $\sum_{j=0}^2 \alpha_j = 1$ we get $\sum_{j=0}^2(a_1 v_1^{(j)} + a_2v_2^{(j)})=1$. Recombining this sum we obtain $a_1\sum_{j=0}^2v_1^{(j)} + a_2 \sum_{j=0}^2v_2^{(j)}=1$, but $\sum_{j=0}^2v_1^{(j)}= \sum_{j=0}^2v_2^{(j)}=0$ contradicting the assumption. 
\end{proof}
In the direction of the eigenvector $v_0=(1,1,1)$ for the eigenvalue $1$ the transformation $A$ acts as the identity so that the angles do not change. This corresponds exactly to the case that $\alpha_{new}=(1,0,0)$, i.e. $\alpha_0=\alpha_1=\alpha_2=\frac{1}{3}$, and the triangle is already regular. \\
It should be remarked that we could not have angles $\alpha$ inside the plane spanned by the two complex eigenvectors $v_2=(-1+\sqrt{3}i, 2,-1-\sqrt{3}i)$ and $v_3=\overline{v_2}$ as the first component of the angle is always not zero as proved above in Lemma~\ref{lemma_a0}. \\
Denote the submatrix $$\frac{1}{2} R_{\frac{\pi}{3}}:=\begin{pmatrix} \frac{1}{2} \cos(\frac{\pi}{3}) & \frac{1}{2} \sin(\frac{\pi}{3})\\-\frac{1}{2}\sin(\frac{\pi}{3}) & \frac{1}{2} \cos(\frac{\pi}{3})\end{pmatrix}$$ as it is a rotation by $\frac{\pi}{3}$ and a contraction by $\frac{1}{2}$. Now we can prove that this algorithm regularizes an arbitrary spheric triangle. Denote by $\Delta^{(n)}$ the triangle with angles $\alpha^{(n)}$ constructed around $c$, i.e. the vertices lie on a circle around $c$ with radius $\left\|c-z_j^{(0)}\right\|$ on the sphere: 
\begin{theorem}Let the notations be as above.
For any spheric triangle $\Delta^{(0)}$ with associated angles $\alpha^{(0)}=(\alpha_0^{(0)}, \alpha_1^{(0)},\alpha_2^{(0)})$ the sequence $\Delta^{(n)}$ obtained from $\alpha^{(n)}=A^n\alpha^{(0)}$ converges to a regular triangle.
\end{theorem}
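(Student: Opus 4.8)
\subsubsection*{Proof sketch}

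The plan is to reduce the statement to the spectral description of the averaging matrix $A$ already established above and then to quote the definition of convergence to a regular triangle. First I would record the two structural facts that make the sequence $\Delta^{(n)}$ well defined: each row of $A$ sums to $1$, so the affine constraint $\alpha_0+\alpha_1+\alpha_2=1$ is preserved, and every new coordinate is the mean of two old ones, so the positivity constraints $\alpha_j\geq 0$ are preserved as well. Hence each $\alpha^{(n)}=A^n\alpha^{(0)}$ is again the angle vector of a genuine spheric triangle inscribed in the circle of radius $\|c-z_j^{(0)}\|$ around the fixed circumcenter $c$, and the notion of convergence from the definition applies to it.

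Next I would pass to the eigenbasis $v_0,v_1,v_2$ introduced above and write $\alpha^{(0)}=a_0v_0+a_1v_1+a_2v_2$. Since $Av_0=v_0$ and $Av_k=\lambda_kv_k$ for $k=1,2$, we get $\alpha^{(n)}=a_0v_0+\lambda_1^{\,n}a_1v_1+\lambda_2^{\,n}a_2v_2$; as $\alpha^{(n)}$ is real and $v_2=\overline{v_1}$, $a_2=\overline{a_1}$, the last two terms combine to $2\,\mathrm{Re}(\lambda_1^{\,n}a_1v_1)$, whose euclidean norm is at most $2|\lambda_1|^{\,n}|a_1|\,\|v_1\|$. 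Because $|\lambda_1|=|\lambda_2|=\tfrac12<1$ (equivalently, $\tfrac12 R_{\pi/3}$ has operator norm $\tfrac12$), this bound tends to $0$, so $\alpha^{(n)}\to a_0v_0$. Summing the coordinates of $\alpha^{(0)}$ and using $\sum_j v_1^{(j)}=\sum_j v_2^{(j)}=0$ gives $1=\sum_j\alpha^{(0)}_j=3a_0$, hence $a_0=\tfrac13$ and $a_0v_0=(\tfrac13,\tfrac13,\tfrac13)$, which in the normalization where $2\pi$ corresponds to $1$ is exactly the angle vector $(\tfrac{2\pi}{3},\tfrac{2\pi}{3},\tfrac{2\pi}{3})$. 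Therefore $\alpha^{(n)}$ converges to $(\tfrac{2\pi}{3},\tfrac{2\pi}{3},\tfrac{2\pi}{3})$ with respect to the canonical euclidean norm, which by definition means $\Delta^{(n)}$ converges to a regular spheric triangle. (Lemma~\ref{lemma_a0} is not needed for convergence here, since $a_0=\tfrac13\neq 0$ is forced by the normalization; it only rules out initial data lying in the ``bad'' plane in other parametrizations.)

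I do not expect a serious obstacle: once the circulant eigen-decomposition of $A$ is in hand the argument is essentially the observation that the restriction of $A$ to the hyperplane $\sum\alpha_j=0$ has spectral radius $\tfrac12$. The only points that need a little care are (i) verifying that the positivity and sum constraints really are preserved, so that every $\alpha^{(n)}$ corresponds to an honest triangle on the fixed circumscribed circle, and (ii) keeping the bookkeeping between the normalization $\alpha_0+\alpha_1+\alpha_2=1$ and the geometric angles summing to $2\pi$ explicit, so that the limit is correctly identified as $(\tfrac{2\pi}{3},\tfrac{2\pi}{3},\tfrac{2\pi}{3})$. If desired, the same estimate yields the quantitative statement $\|\alpha^{(n)}-(\tfrac13,\tfrac13,\tfrac13)\|\leq C\,2^{-n}$ with $C$ depending only on the initial triangle, i.e. the convergence is geometric.
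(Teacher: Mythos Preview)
Your proposal is correct and follows essentially the same approach as the paper: both arguments diagonalize the circulant matrix $A$, use that the nontrivial eigenvalues have modulus $\tfrac12$ to get geometric decay of the components along $v_1,v_2$, and identify the limit as $a_0v_0=(\tfrac13,\tfrac13,\tfrac13)$ via the sum constraint. The only cosmetic differences are that the paper passes to the real Jordan form $A_{new}$ with the $\tfrac12 R_{\pi/3}$ block while you work directly with the complex eigenpairs, and that you add the preservation-of-constraints check and correctly observe that Lemma~\ref{lemma_a0} is superfluous here since $a_0=\tfrac13$ is forced.
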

\begin{proof}
Consider an arbitrary spheric triangle $\Delta^{(0)}$ and the associated angle vector $\alpha^{(0)}$ represented as $\alpha^{(0)}_{new}=(a_0,a_1,a_2)$ with respect to the basis  $v_0,v_1,v_2$. Then we get 
$$\alpha^{(n)}_{new} =\begin{pmatrix}a_0\\ \frac{1}{2^n} R_{\frac{n\pi}{3}}\begin{pmatrix}a_1\\a_2\end{pmatrix} \end{pmatrix} \rightarrow_{n \rightarrow \infty} \begin{pmatrix}a_0 \\ 0 \\ 0 \end{pmatrix}=:\alpha^{(*)}_{new}.$$
So, with respect to the euclidean norm we prove the convergence
\begin{align*}
&\left\|\alpha^{(n)}-\alpha^{(*)}\right\| = \left\|\alpha^{(n)}_{new}-\alpha^{(*)}_{new}\right\|\\
=&\sqrt{(a_0-a_0)^2 + \left(\frac{1}{2}\right)^{2n}(a_1^2 + a_2^2)}\\
=&\frac{1}{2^n}\sqrt{a_1^2 + a_2^2}\quad \longrightarrow 0 \quad\mbox{for}\; n \rightarrow \infty.
\end{align*}
Recall that $\alpha^{(*)}=\alpha^{(*)}_{0,new}v_0+\alpha^{(*)}_{1,new}v_1+\alpha^{(*)}_{2,new}v_2$ and therefore $\alpha^{(*)}=(a_0,a_0,a_0)$. So it is $a_0=\frac{1}{3}$. 
Consequently, $\Delta^{(n)}$, the triangle corresponding to the angles $\alpha^{(n)}$, converges to a regular triangle (with respect to the norm of its angle vector in $\mathbb{R}^3$).  
\end{proof}

\subsubsection{Second method}
Let the notations be as above. 
Using half of the angle to rotate every vertex corresponds to the construction of an isosceles triangle over each side of the triangle and connecting the top of these newly constructed triangles to obtain a new triangle. An alternative method could be implemented by allowing other than isosceles triangles: Let $k \geq 2$ be an integer. Then consider the following transformation
\begin{align*}
z_0^{(1)}&=D\left(r,\frac{\alpha^{(0)}_0}{k}\right)z_0^{(0)},\\
z_1^{(1)}&=D\left(r,\frac{\alpha^{(0)}_1}{k}\right)z_1^{(0)},\\
z_2^{(1)}&=D\left(r,\frac{\alpha^{(0)}_2}{k}\right)z_2^{(0)}.
\end{align*}
If we look at the transformation of the angles $\alpha^{(0)}$ we get 
$$\alpha^{(1)}=A\alpha^{(0)}, \; \alpha^{(1)}=\begin{pmatrix}\frac{k-1}{k} & \frac{1}{k} & 0 \\ 0 & \frac{k-1}{k} & \frac{1}{k} \\ \frac{1}{k} & 0 & \frac{k-1}{k}\end{pmatrix}\alpha^{(0)}.$$
Analogously, the matrix $A$ is circulant, and the eigenvalues are therefore the following 
$$\lambda_1 = 1, \; \lambda_2=\frac{1}{2k}(2k - 3 + \sqrt{3}i), \; \lambda_3=\overline{\lambda_3}$$
with the corresponding eigenvectors 
$$v_1=(1,1,1), \; v_2=(-1-\sqrt{3}i,2, -1 + \sqrt{3}i), \; v_3=\overline{v_2}.$$
Writing $A$ with respect to the coordinate system of eigenvectors it has the form 
$$A_{new}=\begin{pmatrix}1 & 0 \\0 & \frac{\sqrt{k^2 - 3k + 3}}{k} R_{\phi(k)}\end{pmatrix}$$
where $R_{\phi(k)}$ notates a planar rotation by $\phi(k)=\arccos\left(\frac{(2k-3)}{2\sqrt{k^2-3k+3}}\right)$.
\begin{theorem}Let the notations be as above, $\Delta^{(0)}$ a spheric triangle and $k\geq 2$. Then $\Delta^{(n)}$ associated to the angles $\alpha^{(n)}=A^n\alpha^{(0)}$ converges to a regular triangle. 
\end{theorem}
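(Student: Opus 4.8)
The plan is to reuse, essentially verbatim, the argument given for the first method (the case $k=2$), the only genuinely new ingredient being a single inequality. As there, the circumcenter $c$ and the radii $\left\|c-z_j^{(0)}\right\|$ are left fixed by the transformation, so $\Delta^{(n)}$ is completely determined by its angle vector $\alpha^{(n)}=A^n\alpha^{(0)}$, and ``converging to a regular triangle'' means precisely that $\alpha^{(n)}$ tends to $(\tfrac13,\tfrac13,\tfrac13)$ in the normalization in which $1$ stands for $2\pi$. One first records that every row of $A$ sums to $1$, hence so does every column (the rows are cyclic shifts), so $A$ preserves the plane $\{\alpha_0+\alpha_1+\alpha_2=1\}$ and the whole orbit stays on it.

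Next I would diagonalize the circulant matrix $A$ over $\mathbb{C}$, exactly as recorded in the excerpt: the eigenvalues are $\lambda_1=1$ with eigenvector $v_1=(1,1,1)$, and $\lambda_2,\lambda_3=\overline{\lambda_2}$ with $\lambda_2=\tfrac{1}{2k}(2k-3+\sqrt3\,i)$ and eigenvectors $v_2=(-1-\sqrt3\,i,2,-1+\sqrt3\,i)$, $v_3=\overline{v_2}$. Since $k^2-3k+3$ has negative discriminant it is strictly positive for every $k$, so $|\lambda_2|=\tfrac{1}{2k}\sqrt{(2k-3)^2+3}=\tfrac{\sqrt{k^2-3k+3}}{k}>0$ and the angle $\phi(k)$ is well defined. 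Passing to the real basis $\{v_1,\operatorname{Re}v_2,\operatorname{Im}v_2\}=\{(1,1,1),(-1,2,-1),(-\sqrt3,0,\sqrt3)\}$ --- which is in fact orthogonal, the three vectors being pairwise perpendicular and the last two of equal length $\sqrt6$ --- the matrix $A$ acts as $1\oplus|\lambda_2|R_{\phi(k)}$, where on the $2$-plane $W:=\operatorname{span}(\operatorname{Re}v_2,\operatorname{Im}v_2)$ the map is a genuine Euclidean rotation by $\phi(k)$ scaled by the factor $|\lambda_2|$. Hence $A^n$ acts as $1\oplus|\lambda_2|^nR_{n\phi(k)}$, and in particular $\left\|A^nw\right\|=|\lambda_2|^n\left\|w\right\|$ for $w\in W$.

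The one substantive step is the estimate $|\lambda_2|<1$. This is equivalent to $k^2-3k+3<k^2$, i.e. to $3<3k$, i.e. to $k>1$, which holds because $k\ge 2$; it is exactly here that the hypothesis $k\ge 2$ enters (for $k=1$ one has $|\lambda_2|=1$ and no contraction, while $k=2$ gives back $|\lambda_2|=\tfrac12$ as in the first method). Now write $\alpha^{(0)}=a_1v_1+w^{(0)}$ with $w^{(0)}\in W$. The $v_1$-component $a_1$ is fixed by $A$, and since the two vectors spanning $W$ have componentwise sum $0$ while $v_1=(1,1,1)$, the constraint $\sum_j\alpha_j^{(0)}=1$ forces $a_1=\tfrac13$ --- this is the analogue of Lemma~\ref{lemma_a0}, in particular $a_1\neq 0$. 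Consequently
$$\alpha^{(n)}=a_1v_1+A^nw^{(0)},\qquad\left\|\alpha^{(n)}-\big(\tfrac13,\tfrac13,\tfrac13\big)\right\|=\left\|A^nw^{(0)}\right\|=|\lambda_2|^n\left\|w^{(0)}\right\|\ \longrightarrow\ 0,$$
so the angle vectors converge to $(\tfrac13,\tfrac13,\tfrac13)$, i.e. $\Delta^{(n)}$ converges to a regular spheric triangle.

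I do not expect any real obstacle: the proof is the same machinery as for the first method, and the only thing that must genuinely be checked is the contraction estimate $|\lambda_2|<1$, which carries the content of the hypothesis $k\ge 2$. The only mild care point is that the eigenbasis is not orthonormal; using the orthogonality of $\{v_1,\operatorname{Re}v_2,\operatorname{Im}v_2\}$ (or, more cheaply, the equivalence of norms on $\mathbb{R}^3$) transfers the convergence to the standard Euclidean norm, which is routine.
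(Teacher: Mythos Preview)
Your proof is correct and takes essentially the same approach as the paper: diagonalize the circulant $A$, use the analogue of Lemma~\ref{lemma_a0} to pin down the $v_1$-component, and verify that the non-unit eigenvalues have modulus $<1$ so the complementary part decays geometrically. Your direct check $k^2-3k+3<k^2\Leftrightarrow k>1$ and your explicit observation that $\{v_1,\operatorname{Re}v_2,\operatorname{Im}v_2\}$ is orthogonal with the last two of equal length are, if anything, tidier than the paper's additional bound $|\lambda_2|\le\tfrac{k-1}{k}$.
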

\begin{proof}
Let $\Delta^{(0)}$ have the associated angles $\alpha^{(0)}=(\alpha_0^{(0)},\alpha^{(0)}_1,\alpha_2^{(0)})$. Let $\alpha_{new}=(a_0,a_1,a_2)$ be the representation of $\alpha$ with respect to the base of eigenvectors. We consider 
\begin{align*}
\left\|\alpha^{(n)}-\alpha^{(*)}\right\|=&\left\|\alpha^{(n)}_{new} - \alpha^{(*)}_{new}\right\|\\
=& \sqrt{(a_0 - a_0)^2 + \left(\frac{k^2-3k+3}{k}\right)^{2n}(a_1^2 + a_2^2)} \\
= &\left(\frac{k^2-3k+3}{k}\right)^n \sqrt{a_1^2 + a_2^2}\quad \mbox{for}\; k \geq 2\\
\leq& \left(\frac{k-1}{k}\right)^n \sqrt{a_1^2 + a_2^2} \quad \longrightarrow 0 \quad\mbox{for}\; n \rightarrow \infty.
\end{align*} 
Therefore, the angles $\alpha^{(n)}$ converge to $(\frac{1}{3},\frac{1}{3},\frac{1}{3})$ and hence, the triangle $\Delta^{(n)}$ to a regular triangle $\Delta^{(*)}$. 
\end{proof}
\begin{rem}Note that the velocity of the convergence depends on $k$ and the convergence becomes slower with a growing $k$ because for $k > 2$ we have 
$$\left(\frac{k-1}{k}\right)^n > \left(\frac{1}{2}\right)^n\quad \mbox{for all}\; n > 0.$$
\end{rem}
\subsection{Numerical Results}
We have implemented the method described above into Matlab and obtained the following convergence rates as an average of 20 experiments. To avoid long runtimes we interrupted the program after 20 iterations so the average iteration for $k=5$ has to be interpreted in the way that nearly half of the experiments would have needed more than 20 iterations to converge to a sufficiently regular triangle:\\
\begin{table}[h]
\centering
\begin{tabular}[h]{|c||c|c|c|c|}
\textbf{k} & 2 & 3 & 4 &5 \\ \hline
\textbf{\O \hspace{0.1cm} iterations} &7.5 & 8.3& 8.45 & 14.75\\ 
 \end{tabular}\vspace{0.2cm}
 \caption{Average of iterations necessary to obtain a regular triangle ($\pm 0.25$ error),  interruption 20 iterations} 
\end{table}

\subsection{Regularization of cyclic spheric $n$-polygons}\label{ss.cyclic}
For $n > 3$ an arbitrary $n$-polygon does not necessarily have a circumscribed circle. In order to be able to generalize our method above to $n$-polygons ($n > 3$) we assume therefore the following property: 
\begin{definition}
A $n$-polygon is called \textbf{cyclic} if it has a circumscribed circle. 
\end{definition}
Consider a cyclic spheric $n$-polygon denoted by $P_n=(z_0,\dots,z_{n-1})$. Construct the center $c$ of the circumscribed circle of this polygon and denote by $\alpha_j$ the angle between $c-z_j$ and $c-z_{j+1}$ for $j=0, \dots,n-1$. We have $\sum_{j=0}^{n-1}\alpha_j=2\pi$. We normalize this sum for simplicity to one. \\
As above we start with an arbitrary $n$-polygon $P_n^{(0)}$ and construct a new $n$-polygon $P_n^{(1)}$ by rotating every vertex $z_j^{(0)}$ by $\alpha_j/k$ around $c$ where $k \geq 2$ can be chosen arbitrarily.\\
The transformation for the angles is as following
$$\alpha_j^{(1)}=\frac{k-1}{k}\alpha_j^{(0)} + \frac{1}{k}\alpha_{j+1}^{(0)}, \quad j=0, \dots,n-1\; \mod n.$$
This transformation can be written as $\alpha^{(1)}=A\alpha^{(0)}$ where $A=(a_{ij})$ with $a_{ii}=\frac{k-1}{k}$ and $a_{ii+1}=\frac{1}{k}$. The matrix $A$ is therefore a circulant matrix, and the eigenvalues and eigenvectors are well known to be 
$$\lambda_j = \frac{k-1}{k} + \frac{1}{k}e^{\frac{2\pi ij}{n}}, \; v_j=(1, e^{\frac{2\pi ij}{n}}, \dots, e^{\frac{2(n-1)\pi ij}{n}}) \; \mbox{for}\; j=0, \dots, n-1.$$
We prove that the $n$-polygon $P^{(m)}$ constructed by taking $n$ vertices on the circle of radius $c-z_j^{(0)}$ around $c$ and keeping the angles $\alpha^{(m)}$ converges to a regular $n$-polygon. Analogously to the case of a triangle we say that a sequence of polygons $P^{(m)}$ \textbf{converges to a regular polygon} if its associated angles $\alpha^{(m)}\in\mathbb{R}^n$ converges to $(\frac{2\pi}{n},\dots, \frac{2\pi}{n}) \in\mathbb{R}^n$ within the canonical euclidean norm of $\mathbb{R}^n$. Before we start we need that the representation of $\alpha$ with respect to the coordinate system of eigenvectors is non-zero in the first entry. Otherwise the whole angle $\alpha^{(n)}$ would converge to zero. This is exactly the content of Lemma~\ref{lemma_a0} above which can be easily generalized to the case of a $n$-polygon: 
\begin{lemma}\label{lemma_a0_general}
Let $\alpha=(\alpha_0, \dots, \alpha_{n-1}) \in \mathbb{R}^n$ be a vector such that $\sum_{i=0}^{n-1}\alpha_i = 1$. With the notations above we have $a_0 \neq 0$ for $\alpha=\sum_{i=0}^{n-1}a_iv_i$. 
\end{lemma}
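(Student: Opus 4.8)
The plan is to run the same contradiction argument used for Lemma~\ref{lemma_a0}, the only extra input being the value of the sum of the entries of each eigenvector. First I would record that for $j \neq 0$ the components of $v_j = (1, \zeta_j, \zeta_j^2, \dots, \zeta_j^{n-1})$, with $\zeta_j := e^{2\pi i j/n} \neq 1$, add up to zero, since $\sum_{l=0}^{n-1}\zeta_j^l = (\zeta_j^n - 1)/(\zeta_j - 1) = 0$; whereas $\sum_{l=0}^{n-1} v_0^{(l)} = n$ trivially. This is the $n$-dimensional replacement for the identities $\sum_j v_1^{(j)} = \sum_j v_2^{(j)} = 0$ invoked in the triangle case.

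Next I would suppose, for contradiction, that $a_0 = 0$, so that $\alpha = \sum_{i=1}^{n-1} a_i v_i$. Summing the $n$ coordinate identities and interchanging the two finite sums gives $\sum_{l=0}^{n-1}\alpha_l = \sum_{i=1}^{n-1} a_i\bigl(\sum_{l=0}^{n-1} v_i^{(l)}\bigr) = 0$, which contradicts the hypothesis $\sum_{l=0}^{n-1}\alpha_l = 1$. Hence $a_0 \neq 0$. Carrying out the same computation without assuming $a_0 = 0$ actually pins down the value: $1 = \sum_l \alpha_l = n\,a_0$, so $a_0 = 1/n$, which is exactly the coefficient of the regular angle vector and reassures us that the limit of the iteration is $(1/n, \dots, 1/n)$ rather than the zero vector.

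I do not anticipate a genuine obstacle; the statement is essentially the observation that the Fourier mode $v_0 = (1,\dots,1)$ carries the mean of $\alpha$, and the normalization $\sum\alpha_i = 1$ forces that mean to be nonzero. The only things to be careful about are the bookkeeping of real versus complex coefficients (the $a_i$ for $i\neq 0$ are complex and occur in conjugate pairs $a_{n-i} = \overline{a_i}$, while $a_0$ is real) and the remark that the eigenvectors $v_j$, being the Fourier basis diagonalizing the circulant matrix $A$, are orthogonal for the Hermitian inner product, so that one may alternatively write $a_0 = \tfrac1n\langle\alpha, v_0\rangle = \tfrac1n\sum_l\alpha_l$ directly and bypass the contradiction entirely.
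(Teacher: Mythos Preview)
Your argument is correct and is essentially identical to the paper's own proof: both assume $a_0=0$, sum the coordinate identities, swap the order of summation, and use the geometric-series identity $\sum_{l=0}^{n-1}\zeta_j^{\,l}=0$ for $j\neq 0$ to reach a contradiction with $\sum_l\alpha_l=1$. Your additional remarks (the explicit value $a_0=1/n$ and the Fourier-coefficient interpretation) go slightly beyond what the paper records but are in the same spirit.
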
 
\begin{proof}
Assume that $a_0=0$. Then we have $\alpha=\sum_{j=1}^{n_1}a_jv_j$. Utilizing that $\sum_{j=0}^{n-1}\alpha_j = 1$ we get $\sum_{k=0}^{n-1}(\sum_{j=1}^{n_1}a_jv^{(k)}_j)$. Changing the order of the summation this means $\sum_{j=1}^{n-1}a_j(\sum_{k=0}^{n-1}v^{(k)}_j)=1$. Consider for $j=1, \dots, n-1$ the sum $\sum_{k=0}^{n-1}v^{(k)}_j$. Applying the sum formula and using that $v^{(k)}_j=e^{\frac{2\pi ijk}{n}}=\left(e^{\frac{2\pi ij}{n}}\right)^k$ we get $\frac{1 - e^{2\pi ij}}{1- e^{\frac{2\pi ij}{n}}}$. But for $j \geq 1$ it is $e^{2\pi ij}=1$, so $\sum_{k=0}^{n-1}v^{(k)}_j=0$. This implies $\sum_{j=1}^{n-1}a_j(\sum_{k=0}^{n-1}v^{(k)}_j)=0$ contradicting the assumption.   
\end{proof} 
Now, we prove the following regularization theorem: 
\begin{theorem} Let the notations be as above, $P^{(0)}_n$ a cyclic spheric $n$-polygon. Then the sequence $P^{(m)}_n$ associated to $\alpha^{(m)}=A^m\alpha^{(0)}$ converges to a regular $n$-polygon. 
\end{theorem}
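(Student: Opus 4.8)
The plan is to follow verbatim the strategy used for spheric triangles, exploiting that the transformation matrix $A$ is circulant and hence diagonalized by the discrete Fourier basis $v_0,\dots,v_{n-1}$ displayed in the excerpt. First I would record the structural facts preserved by the construction: since each vertex $z_j^{(m)}$ is rotated about the \emph{fixed} axis $c$, all vertices remain on the circle of radius $\|c-z_j^{(0)}\|$, so $c$ stays the circumcenter throughout and the polygon $P^{(m)}$ is genuinely reconstructed from the angle vector $\alpha^{(m)}$. Hence it suffices to prove that $\alpha^{(m)}=A^m\alpha^{(0)}$ converges to $(\tfrac1n,\dots,\tfrac1n)$ in the euclidean norm of $\mathbb{R}^n$, which (after the normalization $\sum_j\alpha_j=1$ adopted above) is exactly the target $(\tfrac{2\pi}{n},\dots,\tfrac{2\pi}{n})$ of the definition of convergence to a regular polygon.

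The key analytic step is the eigenvalue estimate $|\lambda_j|<1$ for $j=1,\dots,n-1$, while $\lambda_0=1$. Writing $\theta_j=2\pi j/n$ and expanding the modulus,
\[
|\lambda_j|^2=\left(\frac{k-1}{k}+\frac{1}{k}\cos\theta_j\right)^2+\left(\frac{1}{k}\sin\theta_j\right)^2=\frac{(k-1)^2+1}{k^2}+\frac{2(k-1)}{k^2}\cos\theta_j,
\]
and comparing with $1=\frac{(k-1)^2+2(k-1)+1}{k^2}$ reduces the inequality $|\lambda_j|^2<1$ to $\cos\theta_j<1$, which holds precisely because $j\not\equiv 0\pmod n$. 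Since a circulant matrix is diagonalizable over $\mathbb{C}$ (the $v_j$ are mutually orthogonal with $\|v_j\|^2=n$), I would then write $\alpha^{(0)}=\sum_{j=0}^{n-1}a_jv_j$; Lemma~\ref{lemma_a0_general} gives $a_0\neq 0$, and summing the coordinates of this identity together with $\sum_{k=0}^{n-1}v_j^{(k)}=0$ for $j\geq 1$ forces $a_0=\tfrac1n$.

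Finally $\alpha^{(m)}=\sum_{j=0}^{n-1}a_j\lambda_j^m v_j=\tfrac1n v_0+\sum_{j=1}^{n-1}a_j\lambda_j^m v_j$, so setting $\alpha^{(*)}:=\tfrac1n v_0=(\tfrac1n,\dots,\tfrac1n)$ and $\mu:=\max_{1\le j\le n-1}|\lambda_j|<1$, orthogonality of the $v_j$ yields
\[
\left\|\alpha^{(m)}-\alpha^{(*)}\right\|^2=n\sum_{j=1}^{n-1}|a_j|^2|\lambda_j|^{2m}\le n\,\mu^{2m}\sum_{j=1}^{n-1}|a_j|^2\longrightarrow 0\quad(m\to\infty),
\]
whence $\alpha^{(m)}\to(\tfrac1n,\dots,\tfrac1n)$ and $P^{(m)}$ converges to a regular spheric $n$-polygon. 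I expect the only real obstacle to be the eigenvalue-modulus computation (plus a little care in the complex-versus-real bookkeeping of the Fourier decomposition); everything else is a direct transcription of the triangle argument. One could alternatively, as in the $n=3$ case, pair conjugate eigenvalues into real rotation–scaling blocks of operator norm $|\lambda_j|<1$ and argue blockwise, but the eigenvalue estimate above is the cleanest route for general $n$.
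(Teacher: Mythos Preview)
Your proof is correct and follows the same overall strategy as the paper: diagonalize the circulant matrix $A$ by the discrete Fourier basis, show that every eigenvalue except $\lambda_0=1$ has modulus strictly less than $1$, and conclude that the non-constant components of $\alpha^{(m)}$ decay geometrically. The execution differs, however. The paper passes to the \emph{real} Jordan form, pairing conjugate eigenvalues into $2\times 2$ rotation--scaling blocks $c_jR_{\phi_j(k)}$; this forces a case distinction between odd and even $n$ because an additional real eigenvalue $\lambda_{n/2}=(k-2)/k$ appears when $n$ is even. Your direct computation of $|\lambda_j|^2$ over $\mathbb{C}$, reducing $|\lambda_j|<1$ to $\cos\theta_j<1$, handles all $j\ge 1$ uniformly and eliminates the case split; the orthogonality of the Fourier vectors then gives the norm estimate in one line. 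You also go slightly beyond the paper in explicitly computing $a_0=\tfrac1n$, whereas the paper invokes Lemma~\ref{lemma_a0_general} only to secure $a_0\neq 0$. The alternative you mention at the end---pairing conjugates into real blocks and arguing blockwise---is exactly the route the paper takes.
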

\begin{proof}
Let $P^{(0)}_n$ be an arbitrary cyclic spheric $n$-polygon with associated angle $\alpha^{(0)} \in \mathbb{R}^n$. Denote by $\alpha^{(0)}_{new}=(a_0,\dots,a_{n-1})$ the representation of $\alpha^{(0)}$ with respect to the eigenvectors of $A$. To show the convergence of the sequence $P^{(m)}$ we have to consider
$$\left\|\alpha^{(m)} - \alpha^{(*)}\right\|=\left\|A^m_{new}\alpha^{(0)}_{new} - \alpha^{(*)}_{new}\right\|$$
where $\alpha^{(*)}$ denotes the pointwise limit of $A^m\alpha^{(0)}$ for $m\rightarrow \infty$. We treat the cases of an odd number $n$ and an even number $n$ separately.\\
\textbf{Let $n$ be an odd number:} 
If $n$ is odd, we have only one real eigenvalue $\lambda_0=1$. As $A$ is a real matrix, we always find pairs of eigenvalues and corresponding eigenvectors which are pairwise conjugate, so we can reorder the eigenvalues into $\lambda_0, \lambda_1, \overline{\lambda}_1,\lambda_2, \overline{\lambda}_{2}, \dots, \lambda_{(n-1)/2}, \overline{\lambda}_{(n-1)/2}$, and we obtain a representation of $A$ as 
$$A_{new}=\begin{pmatrix} 1 & 0 & \dots & 0 \\0 & c_1 R_{\phi_1(k)}& 0 &\vdots\\0 & 0 & c_2 R_{\phi_2(k)} & \vdots \\ \vdots & \vdots & \ddots & 0 \\ 0 & 0 & 0& c_{(n-1)/2} R_{\phi_{(n-1)/2}(k)}\end{pmatrix}$$
where $c_j=c_j(k)= \frac{1}{k}\sqrt{k^2 - 2k + 2 + 2(k-1)\cos(2\pi j/n)}$ is the norm of $\lambda_j$ and $R_{\phi_j(k)}$ is a rotation by $\phi_j(k)=\arccos\left(\Re(\lambda_j)/c_j\right)$\footnote{More exactly, $\phi_j(k)=\arccos(\Re(\lambda_j)/c_j)$ for $\sin(2\pi j/n) \ge 0$, otherwise $\phi_j(k)=-\arccos(\Re(\lambda_j)/c_j)$}. Note that $c_j< 1$ for $j=1, \dots (n-1)/2$.\\
If we represent $\alpha$ corresponding to this new coordinate system of $v_0, v_1,v_{n-1}, v_2, v_{n-2}, \dots$ we get 
\begin{align*}
&\left\|\alpha^{(m)}-\alpha^{(*)}\right\|=\left\|A^m_{new}\alpha^{(0)}_{new}-\alpha^{(*)}\right\|\\
=&\sqrt{(a_0-a_0)^2 + \sum_{j=1}^{(n-1)/2}c_j^{2m}(a_{2j}^2 + a_{2j+1}^2)}\\
< & \left(\max_{j=1}^{n-1/2} c_j\right)^m \sqrt{\sum_{j=1}^{(n-1)/2}(a_{2j}^2 + a_{2j+1}^2)}\;\longrightarrow 0 \quad\mbox{for}\;m\rightarrow \infty\\
&\mbox{with}\; 0 <c_j < 1\; \mbox{for}\; j=1,\dots,\frac{n-1}{2}
\end{align*}
So, $P^{(m)}$ converges to a regular $n$-polygon. \\
\textbf{Let $n$ be an even number:}
If $n$ is even, we have two real eigenvalues $\lambda_0=1$ and $\lambda_{n/2}=\frac{k-1}{k} - \frac{1}{k}$ corresponding to the eigenvectors $v_0=(1, \dots,1)$ and $v_{n/2}=(1, -1, 1, \dots, -1)$. We reorder the eigenvalues as $\lambda_0, \lambda_{n/2}, \lambda_1, \overline{\lambda}_1, \dots, \lambda_{(n-2)/2}, \overline{\lambda}_{(n-2)/2}$. Corresponding to this order we get a representation of $A$ as 
$$A_{new}=\begin{pmatrix}\begin{matrix} 1 & 0 \\0 & \frac{k-2}{k}\end{matrix} & 0 & 0 &0\\ 0 &c_1 R_{\phi_1(k)}& 0& 0 \\  0 & 0 & \ddots& 0\\ 0 & 0 & 0& c_{(n-2)/2} R_{\phi_{(n-2)/2}(k)}\end{pmatrix}$$
with all notations as above.\\
It is easily seen that nevertheless the angles $\alpha^{(m)}_{new}$ converges to $(a_0,0,\dots,0)^T$ for $m \rightarrow \infty$ which correspond to the regular polygon.
\end{proof}
 
This shows that every cyclic $n$-polygon converges to a regular $n$-polygon. 
\begin{rem}
As above we shall remark that the convergence depends on the choice of $k\geq 2$, getting slower with a growing $k$. 
\end{rem}
\subsection{Regularization of spheric $n$-polygons}
Let $P_n=(z_0, \dots,z_{n-1})$ be a spheric $n$-polygon. Define a vector $m$ and a radius $r$ such that the euclidean distance (in $\mathbb{R}^3$) of every point $z_i$ to the circle of radius $r$ around $m$ is minimized. Project all $z_i$ onto this circle, i.e. draw the geodesic through $m$ and $z_i$ on the sphere and project $z_i$ onto $z'_i$, the intersection of the geodesic with the circle. We obtain a new $n$-polygon $P'_n=(z'_0,\dots,z'_{n-1})$ which is cyclic. Then we proceed as above in Subsection~\ref{ss.cyclic}.

\section{Regularization of hyperbolic $n$-polygons}\label{s.hyperbolic}
\subsection{Regularization of hyperbolic triangles}
Regarding the question of polygons and their regularization the most important difference of the hyperbolic plane to the spheric or euclidean plane is the fact that there exist infinitely many regular tessalations of the hyperbolic plane as the angle can become arbitrarily small. Consider the Poincar\'{e} disk $\mathbb{D}$ as model for the hyperbolic plane. Geodesics are semicircles orthogonal to the boundary $\mathbb{S}^1$ of $\mathbb{D}$. A triangle $\Delta=(z_0,z_1,z_2)$ is therefore defined by three semicircles $s_0,s_1,s_2$. 
\begin{definition}
A triangle is \textbf{regular} if all angles are equal. A sequence of hyperbolic triangles \textbf{converges to a regular triangle} iff there exists $0 \leq a < \frac{\pi}{3}$ such that the sequence of associated angles (seen as vector in $\mathbb{R}^3$) converges to a vector $(a,a,a)\in\mathbb{R}^3$ within the euclidean norm.  
\end{definition}
\begin{figure}

\begin{center}
\includegraphics[width=0.5\textwidth]{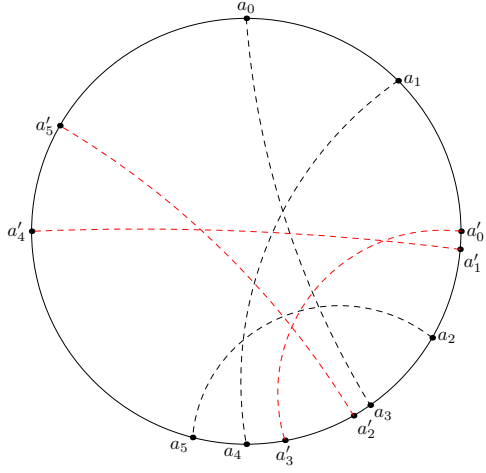}
\caption[hyp_triangle2]{hyperbolic triangle and its first iterate}
\label{fig:Dreiecke6}
\end{center}
\end{figure}
In the euclidean plane having equal angles is equivalent to the fact that every angle is $\frac{\pi}{3}$ as the sum of angles equals $\pi$. In the hyperbolic plane the sum of three inner angles of a triangle can obtain any angle smaller than $\pi$, even $0$, so we can prescribe any angle $\alpha$ smaller than $\frac{\pi}{3}$ and construct a regular triangle having three angles equal to $\alpha$. The case $\alpha=0$ corresponds to the ideal triangle whose geodesics meet at infinity, and its area is therefore infinite.\\
Consider now a triangle $\Delta$ defined by three geodesics $s_0,s_1$ and $s_2$. Each of these semicircles meets the boundary in two points $a_0,a_3$, $a_1,a_4$ and $a_2,a_5$, respectively. If the arc length of $a_0$ to $a_1$ written as $b_0:=d_{\mathbb{S}^1}(a_0,a_1)$ is equal to the arc lengths $b_2:=d_{\mathbb{S}^1}(a_2,a_3)$ and $b_4:=d_{\mathbb{S}^1}(a_4,a_5)$ (see Figure~\ref{fig:Dreiecke6}), then the triangle is \emph{regular}. We prove this easily: 
\begin{lemma}\label{l.regular}
Let $\Delta$ be a hyperbolic triangle defined by the intersections of the three geodesics $s_0,s_1,s_2$ in the Poincar\'e disk $\mathbb{D}$ and let $a_j,a_{j+3}$ be the points on the border $\partial \mathbb{D}$ where $s_j$ meets the border for $j=0,1,2$. Assume that the distances between the border points fulfill $d_{\mathbb{S}^1}(a_j,a_{j+1})=a$ for $j=0,2,4$ and $d_{\mathbb{S}^1}(a_j,a_{j+1})=b$ for $j=1,3,5$ (where $a_6=a_0$). Then the hyperbolic triangle defined by the intersection points of $s_0,s_1,s_2$ is regular, i.e. its inner angles are equal.    
\end{lemma}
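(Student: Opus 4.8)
The plan is to produce an explicit hyperbolic isometry of order three which permutes the three defining geodesics $s_0,s_1,s_2$ cyclically, and then to invoke the elementary fact that isometries preserve angles. Since $d_{\mathbb{S}^1}(a_j,a_{j+1})=a$ for $j=0,2,4$ and $d_{\mathbb{S}^1}(a_j,a_{j+1})=b$ for $j=1,3,5$, adding up the six boundary arcs gives $3(a+b)=2\pi$, hence $a+b=\frac{2\pi}{3}$.

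Next I would set $R$ to be the Euclidean rotation $z\mapsto e^{2\pi i/3}z$ of the disk about its centre. This is an orientation-preserving isometry of $\mathbb{D}$: it sends geodesics to geodesics, it fixes $\partial\mathbb{D}$ setwise, and the map it induces on $\partial\mathbb{D}\cong\mathbb{S}^1$ is exactly the arc-length rotation by $\frac{2\pi}{3}$, so it preserves $d_{\mathbb{S}^1}$. The arc from $a_j$ to $a_{j+2}$ through $a_{j+1}$ has length $d_{\mathbb{S}^1}(a_j,a_{j+1})+d_{\mathbb{S}^1}(a_{j+1},a_{j+2})=a+b=\frac{2\pi}{3}$ for every index (taken modulo $6$), so $R(a_j)=a_{j+2}$. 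Because $s_j$ is the unique geodesic with endpoints $a_j$ and $a_{j+3}$, and $R$ carries this pair to $\{a_{j+2},a_{j+5}\}=\{a_{j+2},a_{(j+2)+3}\}$, one reads off $R(s_0)=s_2$, $R(s_1)=s_0$, $R(s_2)=s_1$; that is, $R$ cycles the three geodesics.

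It then follows that $R$ maps $\Delta$ onto itself (as the component of $\mathbb{D}\setminus(s_0\cup s_1\cup s_2)$ bounded by all three geodesics, $\Delta$ is preserved by any isometry permuting $\{s_0,s_1,s_2\}$), carrying its three vertices $v_0=s_0\cap s_1$, $v_1=s_1\cap s_2$, $v_2=s_2\cap s_0$ cyclically onto one another, namely $v_0\mapsto v_2\mapsto v_1\mapsto v_0$. Since $R$ is an isometry, the inner angle of $\Delta$ at each vertex equals the inner angle at its image, hence all three angles coincide and $\Delta$ is regular. The one point that needs care is the index bookkeeping in the middle step: it is precisely the alternating pattern $(a,b,a,b,a,b)$ that forces $R$ to send the set $\{s_0,s_1,s_2\}$ to itself — for a generic cyclic arrangement of the six boundary points a rotation by $\frac{2\pi}{3}$ would not preserve this triple of geodesics. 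Everything else (rotations about the centre are hyperbolic isometries, and isometries preserve geodesics and angles) is standard and requires no computation.
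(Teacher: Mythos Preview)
Your proof is correct and follows essentially the same approach as the paper: both show $a+b=\tfrac{2\pi}{3}$, use the rotation $z\mapsto e^{2\pi i/3}z$ about the centre of $\mathbb{D}$ as a hyperbolic isometry that sends $a_j\mapsto a_{j+2}$ and hence permutes $s_0,s_1,s_2$ cyclically, and conclude that the interior angles coincide because isometries preserve angles. Your exposition is in fact slightly cleaner in that you apply the single rotation once to all three geodesics simultaneously, whereas the paper phrases the argument as two separate applications of the rotation to pairs of geodesics.
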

\begin{proof}
Let $\Delta=(z_0,z_1,z_2) \subset \mathbb{D}$ be the triangle defined by $z_j \in s_j\cap s_{j+1}$ for $j=0,1,2$ and $s_3=s_0$. Denote the angle at $z_j$ by $\alpha_j$ for $j=0,1,2$. The points $a_j, j=0,\dots,5$ are on $\mathbb{S}^1$. If the distances between them are as assumed, it holds that the circle can be divided into three arcs of length $a+b$, so we have $a+b=\frac{2\pi}{3}$. Consequently, we get $a_{j+2}=e^{\frac{2 \pi i}{3}}a_j$ for $j \in \mathbb{Z}_6$. If we rotate $s_0$ and $s_1$ simultaneously by $\frac{2\pi}{3}$, then we map $a_0,a_3$ onto $a_2,a_5$ and $a_1,a_4$ onto $a_3,a_0$. As geodesics are uniquely defined by two points, we map indeed the geodesic $s_0$ onto $s_2$ and $s_1$ onto $s_0$. So the intersection point $z_0$ is mapped onto $z_2$, and as the angles are preserved by the simultaneous rotation of two geodesics, the angle $\alpha_0$ is equal to $\alpha_2$. If we rotate the geodesics $s_1$ and $s_2$, we map them onto $s_0$ and $s_1$, the point $z_1$ onto $z_0$, so we get $\alpha_1=\alpha_0$. This proves that the triangle $\Delta$ defined as above is a regular triangle.       
\end{proof}
Instead of transforming the triangle itself and dealing with hyperbolic distances, we would like to describe the triangle transformation in a mathematically much simpler way as a transformation of the endpoints $a_j\in \partial\mathbb{D}$ of the semicircles which define our triangle, more precisely as a transformation of the distances $b_j$ between these endpoints which we want to make triplewise equal.\\
The boundary circle $\mathbb{S}^1$ can be understood as $\mathbb{R}/\mathbb{Z}$ with additive structure, and we consider the end points of the semicircles $s_0,s_1,s_2$ defining a hyperbolic triangles as points $a_j \in \mathbb{R}/\mathbb{Z}$ for $j=0,\dots,5$. If we map the point $a_j^{(0)}$ to $a_j^{(1)}= a_j^{(0)} + \frac{1}{2}(a_{j+2}-a_j)$, this corresponds intuitively to the construction of an isosceles triangle over each side of our triangle and connecting the apices of these new three triangles. As we are interested in equalizing the distance between the points, it is more efficient to consider in fact the transformation onto the distances $b_j=\left|a_{j+1} - a_j\right|$ with $j\in \mathbb{Z}_6$ instead of the points.
It is easily calculated that the transformation does not change by this shift of our view point.\\
So, we have 
$$b^{(1)}=Ab^{(0)}, \; \begin{pmatrix}b_0^{(1)}\\ b_1^{(1)} \\ b_2^{(1)} \\ b_3^{(1)} \\ b_4^{(1)} \\ b_5^{(1)} \end{pmatrix}= \begin{pmatrix} \frac{1}{2} & 0 & \frac{1}{2} & 0 & 0 & 0 \\ 0 & \frac{1}{2} & 0 & \frac{1}{2} & 0 & 0 \\ 0 & 0 & \frac{1}{2} & 0 & \frac{1}{2} & 0 \\ 0 & 0 & 0 &\frac{1}{2} & 0 & \frac{1}{2} \\ \frac{1}{2} & 0 & 0 & 0 &  \frac{1}{2}&0 \\0 & \frac{1}{2} & 0 & 0 & 0 & \frac{1}{2} \end{pmatrix} \begin{pmatrix}b_0^{(0)}\\ b_1^{(0)} \\ b_2^{(0)} \\ b_3^{(0)} \\ b_4^{(0)} \\ b_5^{(0)} \end{pmatrix}.$$
This is a circulant matrix with the double eigenvalue $\lambda_0=\lambda_1=1$ and corresponding eigenvectors $v_0=(1,1,1,1,1,1)$ and $v_1=(1,-1,1,-1,1,-1)$. The other pairwise conjugate eigenvalues are $\lambda_2=\frac{3 + \sqrt{3}i}{4}$, $\lambda_3=\overline{\lambda_2}$ and $\lambda_4=\frac{1+\sqrt{3}i}{4}$, $\lambda_5=\overline{\lambda}_4$.  
We have the representation $$A_{new}=\begin{pmatrix} \begin{matrix}1 & 0 \\0 & 1 \end{matrix} & \begin{matrix} 0 & 0 \\0 & 0 \end{matrix} & \begin{matrix} 0 & 0 \\0 & 0 \end{matrix} \\ \begin{matrix} 0 & 0 \\0 & 0 \end{matrix} &\frac{3}{4}R_{\frac{\pi}{6}}& \begin{matrix} 0 & 0 \\0 & 0 \end{matrix}\\ \begin{matrix} 0 & 0 \\0 & 0 \end{matrix}& \begin{matrix} 0 & 0 \\0 & 0 \end{matrix}& \frac{1}{2}R_{\frac{\pi}{3}}\end{pmatrix}$$
with respect to the coordinate system of eigenvectors. \\
So, we get that $$A^n_{new}b^{(0)}_{new} \rightarrow_{n \rightarrow \infty} (b^{(0)}_{0,new},b^{(0)}_{1,new}, 0, 0, 0, 0)^T=:b^{(*)}_{new}.$$
By representing this limit vector $b^{(*)}_{new}$ within the canonical coordinates we obtain $$b^{(*)}=\begin{pmatrix}b^{(0)}_{0,new}+b^{(0)}_{1,new}\\b^{(0)}_{0,new}-b^{(0)}_{1,new}\\b^{(0)}_{0,new}+b^{(0)}_{1,new}\\b^{(0)}_{0,new}-b^{(0)}_{1,new}\\b^{(0)}_{0,new}+b^{(0)}_{1,new}\\b^{(0)}_{0,new}-b^{(0)}_{1,new}\end{pmatrix}$$
which shows exactly that the distances are triplewise equal, so we have the setting we were looking for in order to obtain a regular triangle.  
Now we can prove that our transformation provides us indeed with a regular triangle. We introduce the following notation: We calculate $b^{(n)}=A^nb^{(0)}$. We can always assume $a^{(0)}_0=0$, so we get points $a^{(n)}_0=0, a^{(n)}_1 =b^{(n)}_0$ and $a^{(n)}_i=a^{(n)}_{i-1} + b^{(n)}_{i-1}$ for $i=1,\dots,5$. For $i=0,1,2$ the points $a^{(n)}_i, a^{(n)}_{i+3}$ uniquely define a semicircle $s_i^{(n)}$ orthogonal at the border. We denote the triangle resulting from the intersection points of these three geodesics by $\Delta^{(n)}$. 
\begin{theorem}
With the notations above let $\Delta^{(0)} \subset \mathbb{D}$ be a triangle in the Poincar\'e disk. Then the triangle $\Delta^{(n)}$ converges to a regular triangle.
\end{theorem}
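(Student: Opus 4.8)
The plan is to reduce everything to the linear dynamics $b^{(n)}=A^{n}b^{(0)}$ on the space of boundary-distance vectors, exactly as in the spheric cases above, and then to transfer convergence of the $b^{(n)}$ to a statement about the interior angles $\alpha^{(n)}=(\alpha_{0}^{(n)},\alpha_{1}^{(n)},\alpha_{2}^{(n)})$ of $\Delta^{(n)}$. The spectral part is already carried out in the lines preceding the theorem: in the eigenbasis $v_{0},\dots,v_{5}$ the matrix $A$ is block diagonal, the block on the plane $\langle v_{0},v_{1}\rangle$ being the identity and the remaining two blocks being $\tfrac{3}{4}R_{\pi/6}$ and $\tfrac{1}{2}R_{\pi/3}$, of operator norm $\tfrac{3}{4}<1$ and $\tfrac{1}{2}<1$. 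Hence, writing $b^{(0)}_{new}$ for the coordinates of $b^{(0)}$ in this (orthonormal) basis and $b^{(*)}_{new}=(b^{(0)}_{0,new},b^{(0)}_{1,new},0,0,0,0)$, one has
\[
\bigl\|b^{(n)}-b^{(*)}\bigr\|=\bigl\|A^{n}_{new}b^{(0)}_{new}-b^{(*)}_{new}\bigr\|\le\Bigl(\tfrac{3}{4}\Bigr)^{n}\sqrt{(b^{(0)}_{2,new})^{2}+(b^{(0)}_{3,new})^{2}+(b^{(0)}_{4,new})^{2}+(b^{(0)}_{5,new})^{2}}\;\longrightarrow\;0 ,
\]
and, as computed there, the limit is \emph{triplewise equal}: $b^{(*)}=(a^{*},b^{*},a^{*},b^{*},a^{*},b^{*})$ with $a^{*}+b^{*}=\tfrac{1}{3}$. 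The coefficient $b^{(0)}_{0,new}$, i.e. the component of $b^{(0)}$ along $v_{0}=(1,\dots,1)$, is automatically nonzero since the $b^{(0)}_{j}$ are positive and sum to $1$ (this is the here-trivial analogue of Lemma~\ref{lemma_a0}), so $a^{*}$ and $b^{*}$ are not both zero.

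Two things must be checked before the limit can be read off as a regular triangle. First, every $\Delta^{(n)}$ is a genuine triangle: $A$ has nonnegative entries and $b^{(1)}_{j}=\tfrac{1}{2}b^{(0)}_{j}+\tfrac{1}{2}b^{(0)}_{j+2}$ (indices mod $6$), so positivity of all six entries is preserved; the six boundary points then stay in cyclic order $a^{(n)}_{0}<a^{(n)}_{1}<\dots<a^{(n)}_{5}$, the three geodesics joining opposite points pairwise cross inside $\mathbb{D}$, and $\Delta^{(n)}$ has a well-defined angle vector $\alpha^{(n)}$. Second, the interior angle at the intersection of two geodesics anchored at $\partial\mathbb{D}$ is a continuous function of the four endpoints --- one sees this by applying a M\"obius transformation taking the intersection point to $0$, which turns the two geodesics into diameters whose Euclidean angle depends continuously on the images of the endpoints, or directly via the cross-ratio of the four endpoints. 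Thus the map $b\mapsto\alpha$ is continuous on the region of distance vectors with all entries positive, and it extends continuously to the boundary of that region, taking the value $0$ in any angle whose two defining endpoints have merged (an ideal vertex).

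Combining these: if all entries of $b^{(*)}$ are positive, Lemma~\ref{l.regular} applies to the limiting boundary configuration and shows that the triangle $\Delta^{(*)}$ it defines is regular, so $\alpha^{(n)}\to\alpha^{(*)}=(a,a,a)$ for some $a$; since all six points are distinct the vertices of $\Delta^{(*)}$ lie strictly inside $\mathbb{D}$ and the angle sum of a hyperbolic triangle is $<\pi$, hence $0<a<\tfrac{\pi}{3}$. If instead some entry of $b^{(*)}$ vanishes, say $a^{*}=0$ (the other case being symmetric), then $b^{*}=\tfrac{1}{3}>0$, the six endpoints merge pairwise into three distinct points, $b^{(*)}$ describes the ideal triangle on them, and by continuity all three $\alpha^{(n)}_{j}\to 0$. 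In either case $\alpha^{(n)}\to(a,a,a)$ with $0\le a<\tfrac{\pi}{3}$, which is exactly the definition of $\Delta^{(n)}$ converging to a regular triangle.

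The contraction estimate is routine and mirrors the earlier theorems; the substantive work is the transfer step. The two delicate points are (i) obtaining a usable continuity (equivalently, a workable closed formula) for the intersection angle of two geodesics in terms of their endpoints on $\partial\mathbb{D}$, so that $b^{(n)}\to b^{(*)}$ really forces $\alpha^{(n)}\to\alpha^{(*)}$, and (ii) the borderline degenerate limits where $b^{(*)}$ has a zero entry and the regular limit is an ideal triangle: these lie within the scope of the theorem (the definition permits $a=0$), so the continuity argument has to cover them and not only the interior case treated directly by Lemma~\ref{l.regular}.
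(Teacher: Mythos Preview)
Your proof is correct and follows essentially the same route as the paper: both reduce to the spectral analysis of the circulant matrix $A$ in the eigenbasis, obtain the same contraction estimate $\|b^{(n)}-b^{(*)}\|\le(3/4)^{n}\cdot(\dots)$, identify the limit $b^{(*)}=(a,b,a,b,a,b)$, and invoke Lemma~\ref{l.regular}. Where you go further is in the transfer step from $b$-convergence to $\alpha$-convergence and in the treatment of the ideal limit: the paper simply writes ``by definition the sequence $\Delta^{(n)}$ converges to $\Delta^{(*)}$'' without spelling out the continuity of the angle map $b\mapsto\alpha$ or the degenerate case $a^{*}=0$ (the latter is relegated to a remark after the proof), so your argument is strictly more complete on those points.
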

\begin{proof}
Let $\Delta^{(0)}$ be an arbitrary triangle in the Poincar\'e disk $\mathbb{D}^2$. It is then defined by three semicircles which meet the border $\partial \mathbb{D}$ orthogonally at six points. Denote these points clockwisely by $a_0, \dots, a_5$ and set $b^{(0)}_i=a_i-a_{i+1}$ for $i=0,\dots,5$ and $a_6=a_0$. Denote by $b^{(0)}_{new}=(b^{(0)}_{0,new},\dots,b^{(0)}_{5,new})$ the representation of the vector $b^{(0)}$ with respect to the eigen vectors of $A$ and by $b^{(*)}_{new}=(b^{(0)}_{0,new},b^{(0)}_{1,new}, 0, 0, 0, 0)^T$ the limit of $A^n_{new}b^{(0)}_{new}$ as above. Consequently, we have 
\begin{align*}
&\left\|b^{(n)}-b^{(*)}\right\|\\
=&\bigg(\left(b^{(0)}_{0,new}-b^{(0)}_{0,new}\right)^2 + \left(b^{(0)}_{1,new}-b^{(0)}_{1,new}\right)^2 + \\
&\left(\frac{3}{4}\right)^{2n}\left(\left(b^{(0)}_{2,new}\right)^2+ \left(b^{(0)}_{3,new}\right)^2\right) + \left(\frac{1}{2}\right)^{2n}\left(\left(b^{(0)}_{4,new}\right)^2 + \left(b^{(0)}_{5,new}\right)^2\right)\bigg)^{\frac{1}{2}}\\
< &\left(\frac{3}{4}\right)^{n}\bigg(\left(b^{(0)}_{2,new}\right)^2 + \left(b^{(0)}_{3,new}\right)^2 + \left(b^{(0)}_{4,new}\right)^2 + \left(b^{(0)}_{5,new}\right)^2\bigg)^{\frac{1}{2}}\\
&\longrightarrow 0 \quad \mbox{for}\; n \rightarrow \infty.
\end{align*}
Expressed in the canonical coordinates we have $b^{(*)}=(a,b,a,b,a,b)^T$ with $a=b^{(0)}_{0,new}+b^{(0)}_{1,new}$ and $b=b^{(0)}_{0,new}-b^{(0)}_{1,new}$. 
So the corresponding triangle $\Delta^{(*)}$ is with Lemma~\ref{l.regular} a regular triangle, and by definition the sequence $\Delta^{(n)}$ converges to $\Delta^{(*)}$.   
\end{proof}
\begin{rem}
Remark that the same argument as in Lemma~\ref{lemma_a0_general} can adapted to the case of the vector $b^{(0)}$ as the sum of its entries is exactly one, the length of the unit circle $\mathbb{R}/\mathbb{z}$. Consequently, we can conclude that $b_{0,new}^{(0)} \neq 0$ and as all coefficients are positiv, we get $a\neq 0$ for all triangles.\\
Therefore, the triangle $\Delta^{(*)}$ is \emph{ideal} if and only if $b$ vanish, i.e. $b_{0,new}^{(0)}=b_{0,new}^{(0)}$.
\end{rem}
\subsection{Regularization of hyperbolic $n$-polygons}
In opposite to the case of spheric $n$-polygon, we can directly adapt the method described above to hyperbolic $n$-polygons defined by the intersection of $n$-geodesics and therefore by $2n$ points on the border. 
\section{Conclusions and Outlook}
Looking at these transformations we would like to make some general remarks from a dynamical point of view about these transformations and to fortify theoretically why the transformations described above are exactly the good ones for our purpose. For simplicity we restrict ourselves to euclidean triangles, but as seen above, the arguments utilized were all borrowed from the euclidean setting. In \cite{D40} one can find a systematic algebraic discussion of linear transformations of polygons which covers our considerations for triangles under the assumption that every transformation is cyclic, i.e. one can permute the index of the points of the polygon without changing the transformation, but misses the dynamical focus. 
\subsection{General remarks about transformations on triangles}
Let $\Delta=(z_0,z_1,z_2)$ be an arbitrary euclidean triangle with $z_j \in \mathbb{C}$ and $c$ its circumcenter. We are looking for a transformation $T$ of $\Delta$ such that the sequence $T^n(\Delta)$ converges to a regular triangle. Let us suppose that the distance $\left|c-z_j\right|$ should be kept constant throughout the transformation.  This assumption is not as random as it might seem because it keeps the triangle centered at the same point and its area bounded. An immediate consequence of this assumption is that every transformation of the triangle is in fact a transformation of the angles $\alpha_j$ between the vectors $c-z_j$ and $c-z_{j+1}$. 
So, we should study transformations $$T: \mathbb{R}^3 \rightarrow \mathbb{R}^3, \; \alpha=(\alpha_0,\alpha_1,\alpha_2)\mapsto T(\alpha):=(T_0(\alpha),T_1(\alpha),T_2(\alpha)),$$
under the condition that $\sum_{j=0}^2T_j(\alpha)=\sum_{j=0}^2 \alpha_j = 1$ (corresponding to $2\pi$). Let $T$ be a smooth map. As the transformation $T$ should regularize our triangle we impose that 
$$ T^n(\alpha) \; \longrightarrow_{n \rightarrow \infty} \; \begin{pmatrix}\frac{1}{3}\\\frac{1}{3}\\\frac{1}{3}\end{pmatrix}=:\alpha^{(*)} \quad\mbox{for all}\; \alpha \in \mathbb{R}^3,$$
in particular, we want that a regular triangle is fixed under $T$, i.e.
$$T(\alpha^{(*)})=\alpha^{(*)}.$$
The two conditions stated above mean that the regular triangle $\alpha^{(*)}$ is an attracting fixed point of the transformation $T$. Consequently, the eigenvalues of the derivative $dT$ of $T$ in the fixed point has to be strictly smaller than $1$ as we have the following approximation for $\alpha$ sufficiently close to $\alpha^{(*)}$ 
$$T(\alpha) \; \sim \;\alpha^{(*)} + dT(\alpha^{(*)})(\alpha-\alpha^{(*)}).$$ To make life (and a future implementation) easier we restrict to linear transformations $T$: The fact that $\alpha^{(*)}$ is a fixed point is then equivalent that $\alpha^{(*)}$ is an eigenvector of $T$ to the single eigenvalue $1$. Taking the real Jordan decomposition we can directly conclude that $T$ is conjugate to one of the three following possibilities:
$$\begin{pmatrix}1 & 0 & 0 \\ 0 & a\cos(\phi) & a\sin(\phi)\\ 0 & -a\sin(\phi) & a\cos(\phi) \end{pmatrix} \quad \mbox{or}\quad \begin{pmatrix}1 & 0 & 0 \\ 0 & \lambda_1 & 0\\ 0 & 0 & \lambda_2 \end{pmatrix}\quad \mbox{or}\quad \begin{pmatrix}1 & 0 & 0 \\ 0 & \lambda_1 & 1\\ 0 & 0 & \lambda_1 \end{pmatrix}$$
where $ae^{i\phi}$ and $ae^{-i\phi}$ are the complex conjugate eigenvalues and $\lambda_1,\lambda_2$ denote the real eigenvalues. As $\alpha^{(*)}$ is supposed to be attracting we can conclude that $0 < a < 1$ and $0<\lambda_1,\lambda_2 < 1$. Zero eigenvalues must be excluded as they would permit that certain angles -lying in the corresponding eigenspace - stay unchanged.\\
As rotations are isometries not only in the euclidean, but also in the hyperbolic and elliptic setting, we have always dealt with the first matrix, a rotation matrix. This short dynamical discussion explains the sort of transformations we considered in Sections~\ref{s.spheric} and \ref{s.hyperbolic} above. 
\subsection{Outlook}
Regarding to possible generalizations and applications there are several natural questions one could pose:
\begin{description}
\item[Meshes] We considered the regularization of polygons because of its possible application for the regularization of polygon meshes. The next natural step would be to consider meshes of polygons and to generalize the transformations described above to this case. 
\item[Arbitrary surfaces] The surface of a general object one wants to mesh looks only locally like the euclidean plane. Also, the model for the hyperbolic or elliptic geometry cannot be used globally as a description of a surface. So, we would like to understand the meshing of a surface with positive or negative curvature using spheric or hyperbolic triangles.  
\end{description}

\bibliographystyle{plain}
\bibliography{bibfile}
\end{document}